\let\TeXchi\chi
\newbox\chibox
\chibox \hbox{\raise\dp0 \box 0 }
\def\chi{\copy\chibox}
\renewcommand{\d}{\mathrm{d}}
\newcommand{\dx}{\mathrm{d}x}
\newcommand{\dt}{\mathrm{d}t}
\renewcommand{\epsilon}{\varepsilon}
\renewcommand{\rho}{\varrho}
\author[N. Liao]{Naian Liao}
\address{Naian Liao\\
Fachbereich Mathematik, Universit\"at Salzburg\\
Hellbrunner Str. 34, 5020 Salzburg, Austria}
\email{naian.liao@sbg.ac.at}
\begin{document}
\newtheorem{proposition}{Proposition}[section]
\newtheorem{theorem}{Theorem}[section]
\newtheorem{lemma}{Lemma}[section]
\newtheorem{corollary}{Corollary}[section]
\newtheorem{remark}{Remark}[section]
\renewcommand{\thesection}{\arabic{section}}
\renewcommand{\theequation}{\thesection.\arabic{equation}}
\renewcommand{\thetheorem}{\thesection.\arabic{theorem}}
\numberwithin{equation}{section}
\numberwithin{theorem}{section}
\numberwithin{proposition}{section}
\numberwithin{lemma}{section}
\numberwithin{remark}{section}
\setcounter{secnumdepth}{3}
\newcommand{\cl}{\centerline}
\newcommand{\sms}{\smallskip}
\newcommand{\ms}{\medskip}
\newcommand{\bs}{\bigskip}
\newcommand{\noi}{\noindent}
\newcommand{\itl}[1]{\textit{#1}}
\newcommand{\blf}[1]{\textbf{#1}}
\newcommand{\dsty}{\displaystyle}
\newcommand{\txty}{\textstyle}
\newcommand{\ssty}{\scriptstyle}
\newcommand{\tty}{\texttt}


\newcommand\Par{\mathhexbox278\,}


\newcommand{\al}{\alpha}
\newcommand{\Al}{\Alpha}
\newcommand{\be}{\beta}
\newcommand{\Be}{\Beta}
\newcommand{\Gm}{\Gamma}
\newcommand{\gm}{\gamma}
\newcommand{\dl}{\delta}
\newcommand{\Dl}{\Delta}
\newcommand{\lm}{\lambda}
\newcommand{\Lm}{\Lambda}
\newcommand{\kp}{\kappa}
\newcommand{\varep}{\varepsilon}
\newcommand{\eps}{\epsilon}
\newcommand{\vp}{\varphi}
\newcommand{\sig}{\sigma}
\newcommand{\Sig}{\Sigma}
\newcommand{\om}{\omega}
\newcommand{\Om}{\Omega}
\newcommand{\uom}{\mbox{\boldmath$\omega$}}
\newcommand{\btau}{\mbox{\boldmath$\tau$}}
\newcommand{\bnu}{\mbox{\boldmath$\nu$}}
\newcommand{\up}{\upsilon}
\newcommand{\z}{\zeta}


\newcommand{\df}[1]{\buildrel\mbox{\small def}\over{#1}}
\newcommand{\op}[1]{\buildrel\mbox{\tiny o}\over{#1}}
\newcommand{\db}{\prime\prime}
\newcommand{\bsl}{\backslash}
\newcommand{\lb}{\lbrack\!\lbrack}
\newcommand{\rb}{\rbrack\!\rbrack}
\newcommand\la{\langle}
\newcommand\ra{\rangle}
\newcommand{\ev}{\equiv}
\newcommand{\nev}{\not\equiv}
\newcommand{\nn}{\mathbb{N}}
\newcommand{\qq}{\mathbb{Q}}
\newcommand{\zz}{\mathbb{Z}}
\newcommand{\rr}{\mathbb{R}}
\newcommand{\rn}{\rr^N}
\newcommand{\cc}{\mathbb{C}}
\newcommand{\id}{\mathbb{I}}
\newcommand{\bo}{\mathbb{O}}

\newcommand{\amsb}[1]{\mathbb{#1}}
\newcommand{\mcl}[1]{\mathcal{#1}}
\newcommand{\bl}[1]{\mathbf{#1}}
\newcommand{\ov}[1]{\overline{#1}}
\newcommand{\wt}[1]{\widetilde{#1}}
\newcommand{\wh}[1]{\widehat{#1}}

\newcommand{\llra}{\leftrightarrow}
\newcommand{\lra}{\longrightarrow}
\newcommand{\LLR}{\Longleftrightarrow}
\newcommand{\LRA}{\Longrightarrow}
\newcommand{\LLA}{\Longleftarrow}


\newcommand{\bbox}{\vrule height.6em width.6em 
depth0em} 
\newcommand{\os}{\vbox{\hrule \hbox{\vrule 
height.6em depth0pt 
\hskip.6em \vrule height.6em depth0em}
\hrule}} 


\newcommand{\dvg}{\operatorname{div}}
\newcommand{\curl}{\operatorname{curl}}
\newcommand{\supp}{\operatorname{supp}}
\newcommand{\essup}{\operatornamewithlimits{ess\,sup}}
\newcommand{\essinf}{\operatornamewithlimits{ess\,inf}}
\newcommand{\essosc}{\operatornamewithlimits{ess\,osc}}
\newcommand{\osc}{\operatornamewithlimits{osc}}
\newcommand{\sign}{\operatorname{sign}}
\newcommand{\loc}{\operatorname{loc}}
\newcommand{\diam}{\operatorname{diam}}
\newcommand{\dist}{\operatorname{dist}}
\newcommand{\card}{\operatorname{card}}
\newcommand{\meas}{\operatorname{meas}}
\newcommand{\spn}{\operatorname{span}}
\newcommand{\dtm}{\operatorname{det}}
%


\newcommand{\overlim}{\mathop{\overline{\lim}}\limits}
\newcommand{\underlim}{\mathop{\underline{\lim}}\limits}
\newcommand{\ttop}[2]{\genfrac{}{}{0pt}{}{#1}{#2}}
\newcommand{\bcu}{\mathop{\txty{\bigcup}}\limits}
\newcommand{\bca}{\mathop{\txty{\bigcap}}\limits}
\newcommand{\bsu}{\mathop{\txty{\sum}}\limits}
\newcommand{\pro}{\mathop{\txty{\prod}}\limits}


\newcommand{\pl}{\partial}
\newcommand{\ptt}{\frac{\pl}{\pl t}}
\newcommand{\ppx}{\frac\pl{\pl x}}
\newcommand{\dds}{\frac d{ds}}
\newcommand{\ddt}{\frac d{dt}}

\newcommand{\intl}{\int\limits}
\newcommand{\iintl}{\iint\limits}
\def\Xint#1{\mathchoice
    {\XXint\displaystyle\textstyle{#1}}%
    {\XXint\textstyle\scriptstyle{#1}}%
    {\XXint\scriptstyle\scriptscriptstyle{#1}}%
    {\XXint\scriptscriptstyle\scriptscriptstyle{#1}}%
    \!\int}
\def\XXint#1#2#3{\setbox0=\hbox{$#1{#2#3}{\int}$}
    \vcenter{\hbox{$#2#3$}}\kern-0.5\wd0}
\def\bint{\Xint-}
\def\dashint{\Xint{\raise4pt\hbox to7pt{\hrulefill}}}
\def\dashiint{\bint\kern-0.15cm\bint}

\newcommand{\ovl}[3]{\int_{#1}^{#2}\kern-#3pt\raise4pt\hbox to7pt{\hrulefill}\ }

\newcommand{\ovll}[3]{\intl_{#1}^{#2}\kern-#3pt\raise4pt\hbox to7pt{\hrulefill}\ }

\newcommand{\tvl}[2]{\iint_{#1}\kern-#2pt\raise4pt\hbox to7pt{\hrulefill}\ }



\newcommand{\omt}{\Om_T}
\newcommand{\plo}{\partial\Omega}
\newcommand{\ovo}{\bar{\Om} }

%
\newcommand{\ci}[1]{C^\infty\!\left({#1}\right)}
\newcommand{\cio}[1]{C_o^\infty\!\left({#1}\right)}
\newcommand{\lloc}[1]{L_{\loc}\!\left({#1}\right)}
\newcommand{\xy}{|x-y|}


\newcommand{\intom}{\intl_{\Om}}
\newcommand{\intbo}{\intl_{\plo}}
\newcommand{\inom}{\int_{\Om}}
\newcommand{\inbo}{\int_{\plo}}
\newcommand{\intrn}{\intl_{\rn}}


\newcommand{\bye}{
\title{Remarks on Parabolic De Giorgi Classes}
\date{}
\maketitle
\begin{abstract}
We make several remarks concerning properties of functions in parabolic De Giorgi classes of order $p$.
There are new perspectives including a novel mechanism of propagating positivity in measure,
the reservation of membership under convex composition, and a logarithmic type estimate.
Based on them, we are able to give new proofs of known properties.
In particular, we prove local boundedness and local H\"older continuity of these functions
 via Moser's ideas, thus avoiding De Giorgi's heavy machinery.
We also seize this opportunity to give a transparent
proof of a weak Harnack inequality for non-negative members of some super-class of De Giorgi,
without any covering argument.
\vskip.2truecm
\noindent{\bf AMS Subject Classification (2020):} Primary 35B65; Secondary 35K59, 49N60
\vskip.2truecm
\noindent{\bf Key Words:} Parabolic De Giorgi classes, Moser's iteration,
H\"older continuity, Harnack's inequality
\end{abstract}

\section{Introduction}
De Giorgi classes consist of Sobolev functions in an open set 
$\Om\subset \rr^N$ satisfying a family of energy estimates, i.e.
$u\in W^{1,p}_{\loc}(\Om)$ and for some $\gm>0$,
\begin{equation*}
\int_{K_{\rho}(y)}|D(u-k)_{\pm}|^p\,\dx\le\frac{\gm}{(R-\rho)^p}\int_{K_{R}(y)}|(u-k)_{\pm}|^p\,\dx
\end{equation*}
 for all $k\in\rr$, and any pair of concentric cubes $K_{\rho}(y)\Subset K_R(y)$ in $\Om$.
The significance of De Giorgi classes lies in that they are general enough
to include not only weak solutions to quasi-linear elliptic equation in divergence form (cf. \cite{DB, LU}), but also
local minima or quasi-minima of functionals that do not necessarily
admit any Euler equations (cf. \cite{GG}). 
Formulated by Ladyzhenskaya and Ural'tseva (cf. \cite{LU}), it has been shown that
functions in such classes (of elliptic nature) are locally H\"older continuous, using the beautiful ideas of De Giorgi
in his celebrated work \cite{DG}. A probably even more striking discovery was made
by DiBenedetto and Trudinger in \cite{DT} that non-negative members of De Giorgi classes
actually satisfy Harnack's inequality, which is a typical property of harmonic functions.
In addition to De Giorgi's techniques, 
the main new input of \cite{DT} includes realization of pointwise lower bound of non-negative members
in De Giorgi classes with
a power-like dependence on the measure distribution of their positivity.
The proof uses a deep covering
lemma due to Krylov and Safonov in \cite{KS}.

The original consideration by De Giorgi in \cite{DG} was to obtain H\"older continuity
of weak solutions to linear elliptic equations in divergence form with bounded and measurable
coefficients. Later on, Moser invented a new approach in \cite{Moser1} to show the same kind of result.
Moreover, he was able to obtain Harnack's inequality for such equations in \cite{Moser2}.
A key idea of Moser's new proof in \cite{Moser1} is to show a certain logarithmic function of the solution
is in fact a sub-solution and to formulate its energy estimates.
The feature of Moser's approach is twofold: on the one hand, it simplifies the
original proof of De Giorgi and gives a more intuitive method; on the other hand, it keeps referring to the equation.
This latter point renders a question on whether we could use Moser's idea in \cite{Moser1} to show the H\"older regularity
for functions in De Giorgi classes, where no equations are at our disposal.
Recently, an affirmative answer has been given in \cite{KL} based on a result in \cite{DBG}.
Naturally, one wonders if Moser's idea in \cite{Moser2} could be used to establish Harnack's
inequality for non-negative members of De Giorgi classes.
This, however, remains elusive.

A parabolic version of De Giorgi classes has been introduced in \cite{LSU}.
It should also be pointed out that different notions of parabolic De Giorgi
classes have been introduced in the literature. See for instance \cite{GV, Kinn, Lieberman}.
H\"older regularity has been established in \cite{LSU} employing De Giorgi's ideas.
Harnack's inequality is first established in \cite{W} using the covering lemma
of Krylov and Safonov. As in \cite{DT}, a weak Harnack inequality was proved in \cite{W},
which is of interest in its own right.
A direct proof of Harnack's inequality is presented in \cite{GV}, thus by-passing
a weak Harnack inequality.

The main goals of this note are the following.
In Section~\ref{S:5}, we give a proof of H\"older regularity for members of certain parabolic
De Giorgi classes, via Moser's ideas, thus avoiding De Giorgi's heavy machinery.
This parallels the result for the elliptic De Giorgi classes in \cite{KL}.
In Section~\ref{S:6}, we seize this opportunity to give a transparent proof of 
a weak Harnack inequality for non-negative members of a certain parabolic super-class of 
De Giorgi. The main tool is a measure theoretical lemma
established in \cite{DBGV}, thus by-passing the heavy covering argument of Krylov
and Safonov.
Last but not least, we show in Section~\ref{S:2} that local boundedness of functions
in parabolic De Giorgi classes can be achieved via Moser's iteration.
A similar observation has been made in \cite{DT} for the elliptic case.
In Section~\ref{S:3}, we show convex, non-decreasing functions of members in sub-classes of De Giorgi
are still in the same classes.
In Section~\ref{S:4}, we present some observation of the time propagation of 
measure information.
\subsection{Notations and Definitions}
Let $E$ be an open set in $\rr^{N}\times\rr$ and $(y,s)\in E$. Let $K_{\rho}(y)$ be a cube of edge $2\rho$
centered at $y\in\rr^N$. When $y=0$ we simply write $K_{\rho}$.
A cylinder with vertex at $(y,s)$, the base cube $K_\rho(y)$ and the length $\tau$ is defined by
\[
(y,s)+Q_{\rho,\tau}=K_{\rho}(y)\times (s-\tau, s].
\]
When $\tau=\rho^p$ for some $p>1$, we write $(y,s)+Q_\rho=K_{\rho}(y)\times (s-\rho^p,s]$.
When $(y,s)=(0,0)$, we omit it from the notation.

Suppose $u$ is a measurable function defined in $E$, such that for some $p>1$,
\[
 u\in L^{\infty}\left(s-T,s;L^p\big(K_R(y)\big)\right)\cap L^{p}\left(s-T,s;W^{1,p}\big(K_R(y)\big)\right)
\]
for any $(y,s)+Q_{R,T}\Subset E$.
We say $u$ belongs to the parabolic De Giorgi class $\mathfrak{A}^{\pm}_p(E,\gm)$ of order $p$, if
there exists a constant $\gm>0$ such that, for any $0<\rho<R$, $0<\tau<T$ and $k\in\rr$,
the following integral inequalities hold:
\begin{equation}\label{Eq:1:1}
\begin{aligned}
\essup_{s-\tau<t<s}&\int_{K_{\rho}(y)}(u-k)_{\pm}^p(\cdot,t)\,\dx
+\iint_{(y,s)+Q_{\rho,\tau}}|D(u-k)_{\pm}|^p\,\dx\dt\\
&\le\bigg[\frac{\gm}{(R-\rho)^p}+\frac{\gm}{T-\tau}\bigg]\iint_{(y,s)+Q_{R,T}}(u-k)_{\pm}^p\,\dx\dt.
\end{aligned}
\end{equation}
We define also the class $\mathfrak{A}_p(E,\gm):=\mathfrak{A}^{+}_p(E,\gm)\cap\mathfrak{A}^{-}_p(E,\gm)$.

Now suppose
\[
 u\in C\left(s-T,s;L^p\big(K_R(y)\big)\right)\cap L^{p}\left(s-T,s;W^{1,p}\big(K_R(y)\big)\right)
\]
We say $u$ belongs to the parabolic De Giorgi class $\mathfrak{B}^{\pm}_p(E,\gm)$
of order $p$, if
$u\in\mathfrak{A}^{\pm}_p(E,\gm)$ and in addition
the following integral inequalities hold for any $0<\rho<R$, $0<\tau<T$ and $k\in\rr$:
\begin{equation}\label{Eq:1:2}
\begin{aligned}
\essup_{s-T<t<s}\int_{K_{\rho}(y)}(u-k)_{\pm}^p(\cdot,t)\,\dx&
\le\int_{K_{R}(y)}(u-k)_{\pm}^p(\cdot,s-T)\,\dx\\
&\quad+\frac{\gm}{(R-\rho)^p}\iint_{(y,s)+Q_{R,T}}(u-k)_{\pm}^p\,\dx\dt.
\end{aligned}
\end{equation}
Analogously we define the class
 $\mathfrak{B}_p(E,\gm):=\mathfrak{B}^{+}_p(E,\gm)\cap\mathfrak{B}^{-}_p(E,\gm)$.

We remark that our definitions of De Giorgi classes mainly follow those in \cite{LSU}.
One difference is that we consider an arbitrary order $p>1$, whereas $p=2$ in \cite{LSU}. 
Also, a certain non-homogeneous term is imposed in \cite{LSU} for the inequalities \eqref{Eq:1:1}
and \eqref{Eq:1:2}. However, we decide to omit such a term for simplicity of presentation.

In the sequel, we refer to the set of parameters $\{\gm,\, p,\,N\}$ as the {\it data}
and use $C$ as a generic constant that can be quantitatively determined a priori
only in terms of the data.

Here and in the sequel, we will use $\mathcal{A}(R,T, \rho, \tau)$ to denote a generic positive, homogeneous quantity 
in the sense that under the relation $\rho=\sig_1R$, $\tau=\sig_2 T$ and $T=R^p$, it becomes a 
quantity of $\sig_1$ and $\sig_2$, possibly also depending on the data. 
We will say $u$ belongs to the generalized class $\mathfrak{A}^{\pm}_p$,
if \eqref{Eq:1:1} holds with $\gm$ replaced by $\mathcal{A}$. Similar definition holds for $\mathfrak{B}^{\pm}_p$.

\

\section{Local Boundedness of Functions in $\mathfrak{A}_p^{\pm}$}\label{S:2}
In general the membership in $\mathfrak{A}_p^{\pm}(E,\gm)$ does not
guarantee continuity. A Heaviside function of the time variable would be an example.
Nevertheless every function in $\mathfrak{A}_p^{\pm}(E,\gm)$ is locally bounded from above or from below.

\begin{theorem}\label{Thm:Bd}
Suppose $u\in\mathfrak{A}_p^{\pm}(E,\gm)$. 
Then there is a homogeneous quantity $\mathcal{A}$, such that
\begin{equation}
\essup_{(y,s)+Q_{\rho,\tau}}(u-k)_{\pm}\le\frac{\mathcal{A}}{(R-\rho)^{N}(T-\tau)}\iint_{(y,s)+Q_{R,T}}(u-k)_{\pm}\,\dx\dt
\end{equation}
for any cube $(y,s)+Q_{R,T}\subset E$ and all $k\in\rr$.
The same conclusion holds for members in the generalized classes $\mathfrak{A}^{\pm}_p$.
\end{theorem}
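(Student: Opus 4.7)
The proof follows the Moser-style scheme employed in \cite{DT} for elliptic De~Giorgi classes, carefully adapted to the parabolic setting. I treat only $(u-k)_+$, as the other sign is symmetric.

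\emph{Step 1: auxiliary $L^p$-based sup bound.} Starting from \eqref{Eq:1:1}, I would first establish
\[
\essup_{(y,s)+Q_{\rho,\tau}}(u-k)_+^p \le \frac{C}{(R-\rho)^N(T-\tau)}\iint_{(y,s)+Q_{R,T}}(u-k)_+^p\,\dx\dt
\]
by a classical De~Giorgi iteration. Fix a target value $M$ (to be chosen as the eventual sup bound), pick geometrically shrinking cylinders $Q_n$ interpolating between $(y,s)+Q_{R,T}$ and $(y,s)+Q_{\rho,\tau}$, levels $k_n=k+M(1-2^{-n})$, and set $v_n=(u-k_n)_+$. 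Applying \eqref{Eq:1:1} on each $Q_n$ controls both $\essup_t\!\int v_n^p\,\dx$ and $\iint_{Q_n}|Dv_n|^p\,\dx\dt$ by $Cb^n\iint_{Q_{n-1}}v_n^p\,\dx\dt$. Combining with the parabolic Sobolev embedding
\[
\iint_{Q_n} v_n^{p(1+p/N)}\,\dx\dt \le C\Bigl(\essup_t\!\int v_n^p\,\dx\Bigr)^{p/N}\iint_{Q_n}|Dv_n|^p\,\dx\dt
\]
and the inclusion $\{v_{n+1}>0\}\subset\{v_n>M\,2^{-n-1}\}$ yields, via the usual Chebyshev trick, a recursion $Y_{n+1}\le Cb^n M^{-p^2/N} Y_n^{1+p/N}$ for $Y_n:=\iint_{Q_n}v_n^p\,\dx\dt$. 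The standard fast-decay lemma forces $Y_n\to 0$ precisely when $M^p$ dominates the right-hand side of the displayed $L^p$ bound, which gives that bound.

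\emph{Step 2: interpolation down to $L^1$.} Set $\mathcal{M}(r,\sigma):=\essup_{(y,s)+Q_{r,\sigma}}(u-k)_+$, which is finite by Step~1. For any two intermediate cylinders $Q_{\rho',\tau'}\subset Q_{\rho'',\tau''}$, combining the Step~1 bound with the elementary interpolation
\[
\iint_{Q_{\rho'',\tau''}}(u-k)_+^p \le \mathcal{M}(\rho'',\tau'')^{p-1}\iint_{Q_{R,T}}(u-k)_+\,\dx\dt
\]
and Young's inequality with a small parameter produces
\[
\mathcal{M}(\rho',\tau') \le \theta\,\mathcal{M}(\rho'',\tau'') + \frac{C}{(\rho''-\rho')^N(\tau''-\tau')}\iint_{Q_{R,T}}(u-k)_+\,\dx\dt,
\]
with $\theta$ as small as desired. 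A standard Giaquinta--Giusti absorption lemma, iterated along geometric sequences in both radii, eliminates the $\mathcal{M}$ term and delivers the claimed estimate.

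\emph{Main obstacle.} The delicate point is to coordinate the spatial and temporal rescalings so that the final dependence is exactly $(R-\rho)^N(T-\tau)$. The energy inequality weights $(R-\rho)^{-p}$ and $(T-\tau)^{-1}$ separately, while the Sobolev step applies the exponent $p/N$ only to the time-sup factor; in order for the recursion to close with the correct scaling it is essential to keep $\tau_n-\tau_{n+1}$ proportional to $(\rho_n-\rho_{n+1})^p$ throughout the iteration. All constants produced are homogeneous in $(\rho,\tau;R,T)$ under the normalization $T=R^p$, so the statement for the generalized class, where $\gm$ is replaced by $\mathcal{A}$, follows by the same argument with $\mathcal{A}$ tracked in place of $\gm$.
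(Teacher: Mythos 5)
Your proof is correct, but it takes the route the paper explicitly sets out to avoid. The entire point of this subsection (header ``Proof by Moser's Iteration'' and the opening remark ``The proof is usually written using De Giorgi's iteration\dots\ Nevertheless, we present here a proof based on Moser's iteration'') is to obtain local boundedness without De Giorgi's truncation scheme: the paper multiplies $\eqref{Eq:1:1}_+$ by $k^{\beta}$, $\beta>-1$, and integrates in $k$ over $(0,\infty)$ to manufacture Caccioppoli-type estimates for the powers $u^{p+\beta+1}$ directly, then iterates these up the Sobolev chain with $p_n=p_o\kappa^n$, $\kappa=(N+p)/N$, before finishing with an interpolation/absorption step. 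Your Step~1 is instead the classical De Giorgi iteration (levels $k_n=k+M(1-2^{-n})$, Chebyshev, fast geometric convergence), which is precisely what the paper labels the ``usual'' proof in \cite{LSU,Lieberman}. Your Step~2 --- interpolating from $L^p$ to $L^1$ by H\"older, Young, and a Giaquinta--Giusti absorption lemma --- is essentially the same device the paper employs after its Moser step, only fed with an $L^p$ rather than an $L^{p\kappa}$ input. Two small remarks. First, your ``main obstacle'' worry about enforcing $\tau_n-\tau_{n+1}\propto(\rho_n-\rho_{n+1})^p$ is not actually necessary: the theorem only promises a homogeneous quantity $\mathcal{A}$, which by definition is permitted to depend on the ratios $\rho/R$, $\tau/T$, $T/R^p$, so independent geometric shrinkages in $\rho$ and $\tau$ are fine, with the non-homogeneity absorbed into $\mathcal{A}$. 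Second, in Step~2 you should make explicit that the absorption lemma is applied to the one-parameter family $\phi(\sigma):=\mathcal{M}\big(\rho+\sigma(R-\rho),\,\tau+\sigma(T-\tau)\big)$, since the weight is the product $(\rho''-\rho')^N(\tau''-\tau')$ and the standard lemma is stated for a single radial variable.
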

The proof is usually written using De Giorgi's iteration (cf. \cite{LSU, Lieberman}).
Nevertheless, we present here a proof based on Moser's iteration.
\subsection{Proof by Moser's Iteration}
Multiply both sides of $\eqref{Eq:1:1}_+$ by $k^{\be}$ with $\be>-1$ and integrate
in $\d k$ from $0$ to $\infty$ to get
\begin{align*}
\essup_{-\tau<t<0}&\int_0^{\infty}k^{\be}\,\d k\int_{K_{\rho}}(u(\cdot,t)-k)_{+}^p\,\dx+
\int_0^{\infty}k^{\be}\,\d k\iint_{Q_{\rho,\tau}}|D(u-k)_{+}|^p\,\dx\dt\\
&\le\bigg[\frac{\gm}{(R-\rho)^p}+\frac{\gm}{T-\tau}\bigg]\int_0^{\infty} k^{\be}\,\d k\iint_{Q_{R,T}}(u-k)_{+}^p\,\dx\dt.
\end{align*}
Fixing $-\tau<t<0$ and applying Fubini's theorem, the first term on the left-hand side is estimated by
\[
\int_{K_{\rho}}\int_0^{u}k^{\be}(u-k)_{+}^p\,\d k\dx=\int_0^1\lm^\be(1-\lm)^p\,\d\lm\int_{K_{\rho}}u^{p+\be+1}\,\dx.
\]
One could verify that there exists an absolute constant $C>0$, such that
\[
\int_0^1\lm^\be(1-\lm)^p\,\d\lm\ge\frac{C}{(\be+1)^{p+1}}.
\]
Similarly, the second term yields
\[
\iint_{Q_{\rho,\tau}}\int_0^{u}k^{\be}|Du|^p\,\dx\dt\,\d k=\frac{1}{\be+1}\iint_{Q_{\rho,\tau}}u^{\be+1}|Du|^p\,\dx\dt,
\]
while the integral on the right-hand side is estimated from above by
\begin{align*}
\frac{1}{\be+1}\iint_{Q_{R,T}}u^{p+\be+1}\,\dx\dt.
\end{align*}
Combining the above calculation gives us that for all $\be>-1$,
\begin{align*}
\frac1{(\be+1)^p}&\essup_{-\tau<t<0}\int_{K_{\rho}\times\{t\}}u^{p+\be+1}\,\dx+\iint_{Q_{\rho,\tau}}u^{\be+1}|Du|^p\,\dx\dt\\
&\le\bigg[\frac{\gm}{(R-\rho)^p}+\frac{\gm}{T-\tau}\bigg]\iint_{Q_{R,T}}u^{p+\be+1}\,\dx\dt.
\end{align*}
Written in terms of $w\df{=}u^{\frac{p+\be+1}{p}}$, the above estimate gives
\begin{align*}
\frac1{(\be+1)^p}&\essup_{-\tau<t<0}\int_{K_{\rho}\times\{t\}}w^{p}\,\dx
+\left(\frac{p}{p+\be+1}\right)^{p}\iint_{Q_{\rho,\tau}}|Dw|^p\,\dx\dt\\
&\le\bigg[\frac{\gm}{(R-\rho)^p}+\frac{\gm}{T-\tau}\bigg]\iint_{Q_{R,T}}w^{p}\,\dx\dt.
\end{align*}
This is the starting point of Moser's iteration scheme.
In order to use this energy estimate, we introduce for $\rho,\,\tau>0$, $\sig\in(0,1)$ and $n=0,1,\cdots$,
\begin{align*}
\left\{
\begin{array}{c}
\dsty\rho_n=\sig\rho+\frac{(1-\sig)\rho}{2^n},\quad \tau_n=\sig\tau+\frac{(1-\sig)\tau}{2^n},\\[5pt]
\dsty\tilde{\rho}_n=\frac{\rho_n+\rho_{n+1}}{2},\quad \tilde{\tau}_n=\frac{\tau_n+\tau_{n+1}}2,\\[5pt]
\dsty K_{n}=K_{\rho_n},\quad\tilde{K}_{n}=K_{\tilde{\rho}_n},
\quad Q_{n}=K_n\times (-\tau_n,0],\quad\tilde{Q}_n=\tilde{K}_n\times(-\tilde{\tau}_n,0],\\[5pt]
\dsty p_n=p+\be_{n}+1,\quad p_{n+1}=p_n\kappa,\quad\kappa=\frac{N+p}N,\quad\text{ i.e. }\,p_n=p_o\kappa^n.
\end{array}
\right.
\end{align*}
Set $\z$ to be a standard cutoff function that
vanishes on $\pl_{p}\tilde{Q}_n$ and equals identity in $Q_{n+1}$,
such that $|D\z|\le 2^n/\rho$. 
We apply the Sobolev imbedding (cf. \cite[Chapter I, Proposition~3.1]{DB-DPE}), together with the energy estimate
and the choice $p_o=p\kappa$ such that $\be_o>-1$,
to obtain
\begin{align*}
&\iint_{Q_{n+1}}u^{p_{n+1}}\,\dx\dt\le\iint_{\tilde{Q}_n}(w\z)^{p\frac{N+p}N}\,\dx\dt\\
&\le C\iint_{\tilde{Q}_n}|D(w\z)|^{p}\,\dx\dt\left(\essup_{-\tilde{\tau}_n<t<0}
\int_{\tilde{K}_n\times\{t\}}(w\z)^{p}\,\dx\right)^{\frac{p}N}\\
&\le\frac{C}{(1-\sig)^{p\kappa}}\left(\frac{p+\be_n+1}p\right)^{p}
(\be_n+1)^{\frac{p^2}N}\left(\frac{2^{p n}}{\rho^p}+\frac{2^n}{\tau}\right)^{\kappa}
\left(\iint_{Q_n}u^{p_n}\,\dx\dt\right)^{\kappa}\\
&\le \frac{C p_n^{p\kappa}2^{np\kappa}}{(1-\sig)^{p\kappa}}\left(\frac{1}{\rho^p}+\frac{1}{\tau}\right)^{\kappa}
\left(\iint_{Q_n}u^{p_n}\,\dx\dt\right)^{\kappa}\\
&\le \frac{Cb^{2np\kappa}}{(1-\sig)^{p\kappa}}\left(\frac{1}{\rho^p}+\frac{1}{\tau}\right)^{\kappa}\left(\iint_{Q_n}u^{p_n}\,\dx\dt\right)^{\kappa},
\end{align*}
for some $b,\, C>1$ depending only on the data.
To simply the above iteration, we set
\[
Y_n=\left(\frac1{|Q_n|}\iint_{Q_n}u^{p_n}\,\dx\dt\right)^{\frac1{p_n}},
\]
take the power $p_{n+1}^{-1}$ on both sides,
and rewrite it as
\[
Y_{n+1}\le B^{\frac{n}{\kappa^n}} 
 Y_n,
\]
where
\[
B=\frac{C}{(1-\sig)^{p\kappa}}
\left[\left(\frac{\tau}{\rho^p}\right)^{\frac{p}{N+p}}+\left(\frac{\rho^p}{\tau}\right)^{\frac{N}{N+p}}\right]^{\kappa}.
\]
Iterating this inequality yields 
\[
Y_n\le B^{\frac{n}{\kappa^n}+\frac{n-1}{\kappa^{n-1}}+\cdots+\frac1\kappa}Y_o\le B^{\frac{\kappa}{(\kappa-1)^2}}Y_o.
\]
Sending $n\to\infty$ gives
\[
\essup_{Q_{\sig\rho,\sig\tau}}u\le
\frac{C}{(1-\sig)^{\frac{p\kappa^2}{(\kappa-1)^2}}}\left[\left(\frac{\rho^p}{\tau}\right)^{\frac{N}{N+p}}+\left(\frac{\tau}{\rho^p}\right)^{\frac{p}{N+p}}\right]^{\frac{\kappa^2}{(\kappa-1)^2}}\left(\frac1{|Q_o|}\iint_{Q_o}u^{p_o}_+\,\dx\dt\right)^{\frac1{p_o}}.
\]
Define
\[
M=\essup_{Q_{\rho,\tau}}u_+,\qquad M_{\sig}=\essup_{Q_{\sig\rho,\sig\tau}}u_+.
\]
Then the above estimate yields
\[
M_\sig\le
\frac{CM^{1-\frac1{p\kappa}}}{(1-\sig)^{\frac{N+p}{p}}}\left[\left(\frac{\rho^p}{\tau}\right)^{\frac{N}{N+p}}+\left(\frac{\tau}{\rho^p}\right)^{\frac{p}{N+p}}\right]^{\frac{\kappa^2}{(\kappa-1)^2}}\left(\frac1{|Q_o|}\iint_{Q_o}u_+\,\dx\dt\right)^{\frac1{p\kappa}}.
\]
An interpolation argument would give 
\[
\essup_{Q_{\frac{\rho}2,\frac{\tau}2}}u_+\le C\left[\left(\frac{\rho^p}{\tau}\right)^{\frac{N}{N+p}}+\left(\frac{\tau}{\rho^p}\right)^{\frac{p}{N+p}}\right]^{\frac{p\kappa^3}{(\kappa-1)^2}}\dashint_{-\tau}^0\dashint_{K_\rho}u_+\,\dx\dt.
\]
Fixing $\sig_1,\,\sig_2\in(0,1)$, it is not hard to see that there exists $(y,s)\in Q_{\sig_1 R,\sig_2 T}$, such that
\[
\essup_{Q_{\sig_1 R,\sig_2 T}}u_+\le\essup_{Q_*}u_+,
\]
where we have set
\[
Q_*\df{=}(y,s)+Q_{\frac{(1-\sig_1)R}2,\frac{(1-\sig_2)T}2}.
\]
Applying the above estimate to $Q_*$ to obtain
\[
\essup_{Q_{\sig_1 R,\sig_2 T}}u_+\le\essup_{Q_*}u_+\le 
\frac{C\mathcal{A}}{(1-\sig_1)^NR^N(1-\sig_2)T}\int_{-T}^0\int_{K_R}u_+\,\dx\dt.
\]
where
\[
\mathcal{A}\df{=}\left\{\left[\frac{(1-\sig_1)^{p}R^p}{(1-\sig_2)T}\right]^{\frac{N}{(p+N)}}
+\left[\frac{(1-\sig_2)T}{(1-\sig_1)^pR^p}\right]^{\frac{p}{N+p}}\right\}^{\frac{p\kappa^3}{(\kappa-1)^2}}.
\]
Setting $\rho=\sig_1R$ and $\tau=\sig_2T$, the desired conclusion follows.
\subsection{Critical Mass Lemmas}
Assume $a\in(0,1)$ and $M>0$ are parameters.
The following lemma has been derived in \cite{GV}.
It can be viewed as a direct consequence of the local boundedness estimate
in Theorem~\ref{Thm:Bd}.
\begin{lemma}\label{Lm:critical-mass}
Let $u\in\mathfrak{A}^{\pm}_p(E;\gm)$. Suppose $(y,s)+Q_\rho\subset E$ and $\mu^{\pm}$ satisfy
\[
\mu^+\ge\essup_{(y,s)+Q_{\rho}}u,\qquad\mu^-\le\essinf_{(y,s)+Q_\rho}u.
\]
There exists $\nu>0$ depending only on the data and $a$, such that
if
\[
|[\pm(\mu^{\pm}-u)<M]\cap [(y,s)+Q_{\rho}]|\le\nu|Q_\rho|,
\]
then
\[
\pm(\mu^{\pm}-u)>aM\quad\text{ in }(y,s)+Q_{\frac{\rho}2}.
\]
\end{lemma}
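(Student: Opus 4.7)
The plan is to deduce the lemma directly from Theorem~\ref{Thm:Bd}, as the remark preceding the statement indicates. I treat the ``$+$'' case; the ``$-$'' case follows by symmetry. The decisive move is to choose the truncation level $k=\mu^+-M$ and to work with $(u-k)_+$ on the reference cylinder $(y,s)+Q_\rho$. Two observations drive the argument: since $u\le\mu^+$ a.e.\ on $(y,s)+Q_\rho$, one has $(u-k)_+\le M$ on this cylinder; and the positivity set satisfies $\{(u-k)_+>0\}=\{\mu^+-u<M\}$, whose measure in $(y,s)+Q_\rho$ is at most $\nu|Q_\rho|$ by hypothesis.

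Next, I apply Theorem~\ref{Thm:Bd} to $(u-k)_+$ on the nested pair of intrinsically scaled cylinders $(y,s)+Q_{\rho/2}\Subset (y,s)+Q_\rho$, that is, with $R=\rho$, $T=\rho^p$ shrunk to $\rho/2$, $(\rho/2)^p$. Because both cylinders obey $T=R^p$ with $\sigma_1=\sigma_2=\tfrac12$, the homogeneous prefactor $\mathcal{A}$ reduces to a constant $C$ depending only on the data. Combining this with the two observations above,
\[
\essup_{(y,s)+Q_{\rho/2}}(u-k)_+ \;\le\; \frac{C}{\rho^{N+p}}\iint_{(y,s)+Q_\rho}(u-k)_+\,\dx\dt \;\le\; \frac{C\,M\,\nu\,|Q_\rho|}{\rho^{N+p}} \;\le\; C'\,\nu\,M,
\]
where $C'$ depends only on the data, after absorbing the factor $|Q_\rho|/\rho^{N+p}=2^N$.

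Finally, I fix $\nu=\nu(\text{data},a)$ so small that $C'\nu\le\tfrac12(1-a)$; then $(u-k)_+\le\tfrac12(1-a)M$ on $(y,s)+Q_{\rho/2}$, which after unwinding $k=\mu^+-M$ becomes $\mu^+-u\ge\tfrac12(1+a)M>aM$ on the smaller cylinder, as required. There is no serious obstacle here: the lemma is essentially a cosmetic rearrangement of the $L^\infty$--$L^1$ bound of Theorem~\ref{Thm:Bd}, the only care being the correct choice of truncation level (so that the right-hand side is controlled linearly by $\nu$) and the bookkeeping of constants to confirm that they depend only on the data and $a$. The same argument applies verbatim to members of the generalized class, since Theorem~\ref{Thm:Bd} is valid there as well.
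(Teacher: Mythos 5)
Your proposal is correct and follows essentially the same route as the paper: apply Theorem~\ref{Thm:Bd} to $(u-k)_+$ with $k=\mu^+-M$ on the nested pair $Q_{\rho/2}\Subset Q_\rho$, bound $(u-k)_+\le M$ and the measure of its positivity set by $\nu|Q_\rho|$, then choose $\nu$ small in terms of the data and $a$. The only cosmetic difference is your slightly more conservative choice $C'\nu\le\tfrac12(1-a)$, which gives the strict inequality $>aM$ more directly than the paper's $\nu=(1-a)/C$.
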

\begin{proof}
Assume $(y,s)=(0,0)$. We only treat the class $\mathfrak{A}^+_p(E;\gm)$.
An application of Theorem~\ref{Thm:Bd} in $Q_{\frac{\rho}2}\Subset Q_\rho$, with $k=\mu^+-M$ yields that,
\begin{align*}
\essup_{Q_{\frac{\rho}2}}(u-k)_+\le \frac{C}{|Q_{\rho}|}\iint_{Q_\rho}(u-k)_+\,\dx\dt
\le CM\frac{|[u>k]\cap Q_\rho|}{|Q_\rho|}.
\end{align*}
Now we choose $\nu=\frac{1-a}{C}$, such that
when
\[
\frac{|[u>k]\cap Q_\rho|}{|Q_\rho|}<\nu,
\]
we have
\[
CM\frac{|[u>k]\cap Q_\rho|}{|Q_\rho|}<(1-a)M.
\]
As a result, we arrive at the desired conclusion
\[
\essup_{Q_{\frac{\rho}2}}u\le k+(1-a)M=\mu^+-aM.
\]
\end{proof}
\section{Additional Properties of Functions in $\mathfrak{A}_p^{\pm}$}\label{S:3}
It is known that the convex, non-decreasing function of a sub-harmonic function
yields another sub-harmonic function, whereas the concave, non-increasing function
of a super-harmonic function gives another super-harmonic function.
Similar conclusions hold for the heat operator, and even for more general linear parabolic operators
with bounded and measurable coefficients.
What we are concerned with next is to show analogous properties for 
members of $\mathfrak{A}^{\pm}_p(\gm,E)$.
\begin{lemma}\label{Lm:4:1}
Let $\vp: \rr\to\rr$ be convex and non-decreasing and 
let $u\in\mathfrak{A}_p^{+}(E,\gm)$. 
Then $\vp(u)$ belongs to the generalized class $\mathfrak{A}^+_p$.
\end{lemma}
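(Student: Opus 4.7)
The strategy is to derive the energy estimate for $v=\varphi(u)$ at level $\ell$ from \eqref{Eq:1:1} for $u$ at the matching level $k:=\sup\{t:\varphi(t)\le\ell\}$, using the chain rule and the two convexity inequalities
\[
\varphi'(k^+)(u-k)\le \varphi(u)-\varphi(k)\le \varphi'(u)(u-k),\qquad u\ge k,
\]
which follow from the mean value theorem and the monotonicity of $\varphi'$. First I would reduce to $\varphi\in C^2$ with $\varphi',\varphi''\ge 0$ by mollification. The cases $k=+\infty$ and $k\ge\essup_{(y,s)+Q_{R,T}} u$ are trivial; otherwise monotonicity gives $\{v>\ell\}=\{u>k\}$, on which $v-\ell=\varphi(u)-\varphi(k)$, and, by the chain rule, $D(v-\ell)_+=\varphi'(u)\,D(u-k)_+$ a.e.

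Second, I would invoke Theorem~\ref{Thm:Bd} to obtain $M:=\essup_{(y,s)+Q_{R,T}}u<\infty$, so that $\varphi'(u)\le\varphi'(M)=:L$ on the cylinder. The upper convexity inequality combined with the chain rule yields $(v-\ell)_+\le L(u-k)_+$ and $|D(v-\ell)_+|\le L|D(u-k)_+|$ pointwise. Thus the left-hand side of the target inequality for $v$ is bounded by $L^p$ times the left-hand side of \eqref{Eq:1:1} for $u$ at level $k$; applying \eqref{Eq:1:1} then yields
\begin{align*}
&\essup_{s-\tau<t<s}\int_{K_\rho(y)}(v-\ell)_+^p\,\dx+\iint_{(y,s)+Q_{\rho,\tau}}|D(v-\ell)_+|^p\,\dx\dt \\
&\qquad\le L^p\gm\Bigl[\tfrac{1}{(R-\rho)^p}+\tfrac{1}{T-\tau}\Bigr]\iint_{(y,s)+Q_{R,T}}(u-k)_+^p\,\dx\dt.
\end{align*}

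The final step is to convert $(u-k)_+^p$ into $(v-\ell)_+^p$ on the right; in the non-degenerate case $\varphi'(k^+)>0$ the lower convexity inequality does this at the cost of a factor $\varphi'(k^+)^{-p}$. The main obstacle is the degenerate case $\varphi'(k^+)=0$ (for instance $\varphi(t)=(t-k)_+^2$ at its activation point), where the naive constant blows up and destroys the uniformity in $\ell$ that the generalized class requires. To circumvent this I would integrate \eqref{Eq:1:1}$_+$ for $u$ in the level variable $k$ against the non-negative measure $\d((\varphi'(k))^p)=p\,\varphi'(k)^{p-1}\varphi''(k)\,\d k$, in the spirit of the weighted-Moser integration used in the proof of Theorem~\ref{Thm:Bd}. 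The identity $\int_{-\infty}^{u}\d((\varphi')^p)=\varphi'(u)^p$ together with Fubini turns the gradient term on the left into exactly $\iint|D(v-\ell)_+|^p\,\dx\dt$; an integration by parts recasts the right-hand side as $p\iint\int\varphi'(k)^p(u-k)^{p-1}\,\d k\,\dx\dt$, and a convexity/Jensen-type pointwise comparison bounds the inner integrand by $C_p\varphi(u)^p=C_p(v-\ell)_+^p$; the essup term is handled by the same weighted Fubini. This produces an $\mathcal{A}$ depending only on $p$, $\gm$, $N$ and on $\varphi'(M)$, which is admissible in the generalized class $\mathfrak{A}_p^+$.
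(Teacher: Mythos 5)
Your second route (integrating $\eqref{Eq:1:1}_+$ in the level variable $k$ against $\d(\vp'(k)^p)$) is genuinely different from the paper's, and it works cleanly for the \emph{gradient} term: integrating over $(h,\infty)$ and, crucially, adding $\vp'(h)^p$ times $\eqref{Eq:1:1}_+$ at level $h$, yields $\iint|D(\vp(u)-\vp(h))_+|^p$ exactly on the left (your identity $\int_{-\infty}^u\d(\vp'^p)=\vp'(u)^p$ silently assumes $\vp'(-\infty)=0$ and only addresses one level $\ell$; integrating from $h$ and adding the level-$h$ estimate is what gives the inequality for an arbitrary level $\ell=\vp(h)$). On the right, the pointwise bound $\vp'(k)\le(\vp(u)-\vp(h))/(u-k)$ for $h<k<u$ gives
\begin{equation*}
p\int_h^u(u-k)^{p-1}\vp'(k)^p\,\d k\le p\big(\vp(u)-\vp(h)\big)^p,
\end{equation*}
so the gradient part of the energy estimate closes with the pure constant $p\gm$. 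This is, if anything, tighter than the paper, which instead uses the identity \eqref{Eq:4:1} to split $(\vp(u)-\vp(h))_+$, Minkowski's inequality, and the $L^\infty$ bound of Theorem~\ref{Thm:Bd}.

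The essup term, however, is a real gap and is \emph{not} ``handled by the same weighted Fubini.'' To control $\essup_t\int_{K_\rho}(\vp(u)-\vp(h))_+^p\,\dx$ by the weighted integral of the essup terms for $u$ you would need the reverse pointwise inequality
\begin{equation*}
p\int_h^u(u-k)^{p-1}\vp'(k)^p\,\d k\ge c\,\big(\vp(u)-\vp(h)\big)^p
\end{equation*}
with $c>0$ depending only on $p$, and this fails: truncate $\vp'(k)=(1-k)_+^{-\al}$ at height $\eps^{-\al}$, take $h=0$ and $u=1-\eps$; as $\eps\to0$ the ratio of the two sides tends to $(1-\al)^{p-1}$, which goes to $0$ as $\al\uparrow1$. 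The paper's treatment of this term is precisely what you are missing: it splits \eqref{Eq:4:1} inside the $L^p(K_\rho)$ norm, controls the linear piece $\vp'(h)(u-h)_+$ directly by \eqref{Eq:4:2} and $\eqref{Eq:1:1}_+$ at level $h$, and controls the integral remainder by first passing through the $L^\infty$ bound of Theorem~\ref{Thm:Bd} (so an $L^p$ norm becomes an $L^1$ integral and the identity can be re-applied in reverse). You invoke Theorem~\ref{Thm:Bd} but only to justify the abandoned $\vp'(u)\le\vp'(M)$ argument; the $L^\infty$ estimate is exactly what your weighted route needs for the essup piece. A further issue: an $\mathcal{A}$ ``depending on $\vp'(M)$'' is not admissible in the generalized class, since by definition $\mathcal{A}$ must be a homogeneous quantity determined by the data $\{\gm,p,N\}$ and the ratios $\rho/R$, $\tau/T$ only, never by $u$ or $\vp$.
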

\begin{proof} 
For any such $\vp$ and $h\le k$, observe the following elementary identity
\begin{equation}\label{Eq:4:1}
\big(\vp(u)-\vp(h)\big)_+ -\vp^{\prime}(h)(u-h)_+=\int_\rr (u-k)_+\chi_{[k>h]}\vp^{\prime\prime}(k)\,\d k,
\end{equation}
where $\chi$ is the characteristic function of the indicated set.
Moreover, by the convexity and monotonicity of $\vp$,
\begin{equation}\label{Eq:4:2}
\big(\vp(u)-\vp(h)\big)_+\ge\vp^{\prime}(h)(u-h)_+\ge0.
\end{equation}
From \eqref{Eq:4:1}, for a.e. $t\in(-\tau,0)$
\begin{align*}
\|\big(\vp&(u(\cdot, t))-\vp(h)\big)_+\|_{p,K_{\rho}}\\
&\le \|\vp'(h)(u(\cdot, t)-h)_+\|_{p,K_{\rho}}
+\left\|\int_\rr (u(\cdot, t)-k)_+\chi_{[k>h]}\vp^{\prime\prime}(k)\,\d k\right\|_{p,K_{\rho}}\\
&=I_1+I_2.
\end{align*}
For $I_1$, we estimate by using \eqref{Eq:4:1} and \eqref{Eq:4:2}:
\begin{align*}
I_1^p&\le \left[\frac{\gm}{(R-\rho)^p}+\frac{\gm}{(T-\tau)}\right]\iint_{Q_{R,T}}[\vp^{\prime}(h)]^p(u-h)_{+}^p\,\dx\dt\\
&\le \left[\frac{\gm}{(R-\rho)^p}+\frac{\gm}{(T-\tau)}\right]\iint_{Q_{R,T}}\big(\vp(u)-\vp(h)\big)_+^p\,\dx\dt.
\end{align*}
For $I_2$, we estimate by using \eqref{Eq:4:1}, \eqref{Eq:4:2} and Theorem~\ref{Thm:Bd}:
\begin{align*}
I_2&\le\int_\rr \|(u-k)_+\|_{p,K_{\rho}}\chi_{[k>h]}\vp^{\prime\prime}(k)\,\d k\\
&\le C\rho^{\frac{N}p}\int_\rr \|(u-k)_+\|_{\infty,K_{\rho}}\chi_{[k>h]}\vp^{\prime\prime}(k)\,\d k\\
&\le\frac{\mathcal{A}\rho^{\frac{N}p}}{(R-\rho)^N(T-\tau)}\int_\rr\iint_{Q_{R,T}}(u-k)_{+}\chi_{[k>h]}\vp^{\prime\prime}(k)\,\dx\dt\,\d k\\
&=\frac{\mathcal{A}\rho^{\frac{N}p}}{(R-\rho)^N(T-\tau)}\iint_{Q_{R,T}}[\big(\vp(u)-\vp(h)\big)_+-\vp'(h)(u-h)_+]\,\dx\dt\\
&\le\frac{\mathcal{A}\rho^{\frac{N}p}}{(R-\rho)^N(T-\tau)}\iint_{Q_{R,T}}\big(\vp(u)-\vp(h)\big)_+\,\dx\dt\\
&\le\frac{\mathcal{A}\rho^{\frac{N}p}(R^{N}T)^{1-\frac1p}}{(R-\rho)^N(T-\tau)}\|\big(\vp(u)-\vp(h)\big)_+\|_{p,Q_{R,T}}.
\end{align*}
Recalling $\mathcal{A}(R,T,\rho,\tau)$ represents a generic dimensionless quantity, 
we combine the above estimates to arrive at
\begin{align*}
\essup_{-\tau<t<0}&\int_{K_{\rho}}\big(\vp(u(\cdot,t))-\vp(k)\big)_{+}^p\,\dx\\
&\le\left[\frac{\gm}{(R-\rho)^p}+\frac{\gm+\mathcal{A}}{(T-\tau)}\right]
\iint_{Q_{R,T}}\big(\vp(u)-\vp(k)\big)_{+}^p\,\dx\dt.
\end{align*}

We now handle the part with the space gradient.
From  \eqref{Eq:4:1}, taking the gradient of both sides, then taking the $L^p$-norm over $Q_{\rho,\tau}$ and
applying the continuous version of Minkowski's inequality,
we obtain
\begin{align*}
\|D\big(\vp(u)-\vp(h)\big)_+\|_{p,Q_{\rho,\tau}}
&\le\|\vp'(h)D(u-h)_+\|_{p,Q_{\rho,\tau}}\\
&\quad+\left\|\int_\rr D(u-k)_+\chi_{[k>h]}\vp^{\prime\prime}(k)\,\d k\right\|_{p,Q_{\rho,\tau}}\\
&\le\|\vp^{\prime}(h)D(u-h)_+\|_{p,Q_{\rho,\tau}}\\
&\quad+\int_\rr \|D(u-k)_+\|_{p,Q_{\rho,\tau}}\chi_{[k>h]}\vp^{\prime\prime}(k)\,\d k\\
&=I_3+I_4.
\end{align*}
One estimates $I_3$ using \eqref{Eq:1:2} and \eqref{Eq:4:2}:
\begin{align*}
I_3^p&\le \left[\frac{\gm}{(R-\rho)^p}+\frac{\gm}{(T-\tau)}\right]\iint_{Q_{R,T}}(u-h)^p_+[\vp^\prime(h)]^p\,\dx\dt\\
&\le\left[\frac{\gm}{(R-\rho)^p}+\frac{\gm}{(T-\tau)}\right]\iint_{Q_{R,T}}\big(\vp(u)-\vp(h)\big)^p_+\,\dx\dt
\end{align*}
One estimates $I_4$ by \eqref{Eq:1:2}, \eqref{Eq:4:1}, \eqref{Eq:4:2} and Theorem~\ref{Thm:Bd}:
\begin{align*}
&\int_\rr \|D(u-k)_+\|_{p,Q_{\rho,\tau}}\vp^{\prime\prime}(k)\,\d k\\
&\le\left[\frac{\gm}{(R-\rho)^p}+\frac{\gm}{(T-\tau)}\right]^{\frac1p}
\int_\rr\|(u-k)_+\|_{p, Q_{\frac{R+\rho}2,\frac{T+\tau}2}}\chi_{[k>h]}\vp^{\prime\prime}(k)\,\d k\\
&\le\left[\frac{\gm}{(R-\rho)^p}+\frac{\gm}{(T-\tau)}\right]^{\frac1p}R^{\frac{N}p}T^{\frac1p}
\int_\rr \|(u-k)_+\|_{\infty, Q_{\frac{R+\rho}2,\frac{T+\tau}2}}\chi_{[k>h]}\vp^{\prime\prime}(k)\,\d k\\
&\le\left[\frac{\gm}{(R-\rho)^2}+\frac{\gm}{(T-\tau)}\right]^{\frac1p}\frac{\mathcal{A}R^{\frac{N}p}T^{\frac1p}}{(R-\rho)^N(T-\tau)}
\int_\rr\iint_{Q_{R,T}}(u-k)_+\chi_{[k>h]}\vp^{\prime\prime}(k)\,\d k\\
&=\left[\frac{\gm}{(R-\rho)^p}+\frac{\gm}{(T-\tau)}\right]^{\frac1p}\frac{\mathcal{A}R^{\frac{N}p}T^{\frac1p}}{(R-\rho)^N(T-\tau)}\\
&\quad\times
\iint_{Q_{R,T}}\big[\big(\vp(u)-\vp(h)\big)_+ -\vp^{\prime}(s)(u-h)_+\big]\,\dx\dt\\
&\le\left[\frac{\gm}{(R-\rho)^p}+\frac{\gm}{(T-\tau)}\right]^{\frac1p}\frac{\mathcal{A}R^{\frac{N}p}T^{\frac1p}}{(R-\rho)^N(T-\tau)}
\iint_{Q_{R,T}}\big(\vp(u)-\vp(h)\big)_+\,\dx\dt\\
&\le\left[\frac{\gm}{(R-\rho)^p}+\frac{\gm}{(T-\tau)}\right]^{\frac1p}\frac{\mathcal{A}R^{N}T}{(R-\rho)^N(T-\tau)}
\|\big(\vp(u)-\vp(h)\big)_+\|_{p,Q_{R,T}}.
\end{align*}
Observe that the fractional with $\mathcal{A}$
is again a dimensionless quantity.
Hence we have
\begin{align*}
\iint_{Q_{\rho,\tau}}&\left|D\big(\vp(u)-\vp(h)\big)_+\right|^p\,\dx\dt\\
&\quad\le\left[\frac{\mathcal{A}}{(R-\rho)^p}+\frac{\mathcal{A}}{(T-\tau)}\right]
\iint_{Q_{R,T}}\big(\vp(u)-\vp(h)\big)^p_+\,\dx\dt.
\end{align*}
Combining the above estimates gives the desired conclusion.
\end{proof}
\begin{lemma}\label{Lm:4:2}
Let $\vp: (a,\infty)\to\rr$, for some $a<\infty$ be convex and non-increasing,
such that
\begin{equation}\label{Eq:4:3}
\lim_{k\to\infty}\vp(k)=\lim_{k\to\infty}k\vp^{\prime}(k)=0.
\end{equation}
Suppose $u\in\mathfrak{A}_p^{-}(E,\gm)$, with range in $(a,\infty)$.
Then $\vp(u)$ belongs to the generalized class $\mathfrak{A}^+_p$.
\end{lemma}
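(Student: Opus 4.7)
The plan is to mirror the proof of Lemma~\ref{Lm:4:1}, the main change being that convexity and monotonicity now push in the opposite direction. For any $h \in (a, \infty)$, Taylor's formula with integral remainder (with $\vp^{\prime\prime}$ understood as a non-negative Radon measure) produces the analog of \eqref{Eq:4:1},
\begin{equation*}
\big(\vp(u) - \vp(h)\big)_+ + \vp^{\prime}(h)(u - h)_- = \int_\rr (k-u)_+ \chi_{[k < h]}\vp^{\prime\prime}(k)\,\d k,
\end{equation*}
while the supporting-line inequality for convex functions gives the analog of \eqref{Eq:4:2},
\begin{equation*}
\big(\vp(u)-\vp(h)\big)_+ \ge -\vp^{\prime}(h)(u-h)_- \ge 0;
\end{equation*}
here $\vp^{\prime}(h)\le0$ by the monotonicity of $\vp$, so the signs line up correctly.

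With these ingredients in hand, I would repeat the computation of Lemma~\ref{Lm:4:1} line by line, performing three systematic substitutions throughout: $(u-h)_+ \leftrightarrow (u-h)_-$, $\vp^{\prime}(h) \leftrightarrow -\vp^{\prime}(h)$, and $\chi_{[k>h]} \leftrightarrow \chi_{[k<h]}$. The first is absorbed by invoking the energy estimate \eqref{Eq:1:1} in its $(\cdot)_-$ form, available since $u\in\mathfrak{A}_p^{-}(E,\gm)$. The local boundedness needed to bound the counterparts of $I_2$ and $I_4$ takes the dual shape
$\essup(u-k)_- \le \mathcal{A}|Q_{R,T}|^{-1}\iint (u-k)_-\,\dx\dt,$
which is precisely what Theorem~\ref{Thm:Bd} delivers for the class $\mathfrak{A}_p^{-}$. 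Fubini then returns the inner $k$-integral to $(\vp(u)-\vp(h))_+ + \vp^{\prime}(h)(u-h)_-$, which is dominated by $(\vp(u)-\vp(h))_+$, closing the estimate at the level $k=\vp(h)$.

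The step that I expect to be the main obstacle is precisely the role played by \eqref{Eq:4:3}. Since $\vp$ is non-increasing with limit zero at infinity, the family $\{\vp(h) : h\in(a,\infty)\}$ only covers the positive reals below $\vp(a^+)$, whereas membership in the generalized class $\mathfrak{A}_p^+$ requires the energy inequality for every $k\in\rr$. The conditions $\lim_{k\to\infty}\vp(k)=\lim_{k\to\infty}k\vp^{\prime}(k)=0$ force both boundary contributions in the identity above to vanish as $h\to\infty$, yielding the limiting representation
\begin{equation*}
\vp(u)=\int_a^{\infty}(k-u)_+\,\vp^{\prime\prime}(k)\,\d k,
\end{equation*}
and licensing a Fatou/monotone-convergence passage on each side of the inequality already obtained for $k=\vp(h)$. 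A supplementary inspection disposes of the range $k\le 0$ and confirms that the emerging homogeneous quantity $\mathcal{A}$ depends only on the data.
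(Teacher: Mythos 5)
Your proposal is correct and rests on the same underlying mechanics as the paper's proof: Minkowski's inequality together with the local boundedness of Theorem~\ref{Thm:Bd} to control the $L^p$-norm of the integral against $\vp''$, followed by Fubini to reassemble. Your finite-$h$ identity
\[
(\vp(u)-\vp(h))_+ + \vp'(h)(u-h)_- = \int_\rr (u-k)_-\,\chi_{[k<h]}\,\vp''(k)\,\d k
\]
is indeed the right analog of \eqref{Eq:4:1}, and the sign bookkeeping you describe goes through. Where you diverge from the paper is in the organizing device. The paper invokes \eqref{Eq:4:3} at the outset to write the $h\to\infty$ limit of your identity directly, namely $\vp(u)=\int_\rr(u-k)_-\vp''(k)\,\d k$ (here $k\vp'(k)\to0$ kills the $\vp'(h)(u-h)_-$ term), so that the entire $I_1$--$I_3$ half of your two-term decomposition never appears. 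Then, rather than running a Fatou/monotone-convergence passage to reach the level $k=0$ and a supplementary inspection for other levels, the paper disposes of all levels $l$ at once with the simple observation that if $\vp$ is convex, non-increasing, and satisfies \eqref{Eq:4:3}, then $(\vp-l)_+$ does too; applying the level-zero estimate to $(\vp-l)_+$ in place of $\vp$ yields the energy inequality for $(\vp(u)-l)_+$ verbatim, with no limiting argument. Your route works, but it carries an extra term at each step and an extra limit at the end; the $(\vp-l)_+$ substitution is the cleaner way to cover all levels. One detail worth importing from the paper: it first uses the $\mathfrak{A}_p^-$ lower bound from Theorem~\ref{Thm:Bd} to reduce WLOG to $u\ge0$, ensuring the representation only integrates $\vp''$ over $(a,\infty)$ where it is defined; your Taylor-with-remainder version needs the same reduction so that $h$ and $u$ both stay in the domain of $\vp$.
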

\begin{proof}
Under the conditions of $\vp$, one easily verifies
\begin{equation}\label{Eq:4:4}
\vp(u)=\int_\rr(u-k)_-\vp^{\prime\prime}(k)\,\d k.
\end{equation}
Since $u\in\mathfrak{A}_p^-(E;\gm)$, it is bounded from below by Theorem~\ref{Thm:Bd}.
Hence the above equation is well defined for such $u$
and we may assume with no loss of generality that $u\ge0$.

First, we take $L^p$-norm of both sides over $K_\rho$ to obtain
for all $-\tau<t<0$
\[
\|\vp(u(\cdot,t))\|_{p, K_\rho}=\left\|\int_\rr(u(\cdot,t)-k)_-\vp^{\prime\prime}(k)\,\d k\right\|_{p,K_\rho}.
\]
The right-hand side is estimated by Minkowski's inequality and
Theorem~\ref{Thm:Bd}:
\begin{align*}
\int_\rr &\|(u(\cdot,t)-k)_-\|_{p,K_{\rho}}\vp^{\prime\prime}(k)\,\d k\\
&\le C\rho^{\frac{N}p}\int_\rr \|(u(\cdot, t)-k)_-\|_{\infty,K_{\rho}}\vp^{\prime\prime}(k)\,\d k\\
&\le\frac{\mathcal{A}\rho^{\frac{N}p}}{(R-\rho)^N(T-\tau)}\int_\rr\iint_{Q_{R,T}}(u-k)_{-}\vp^{\prime\prime}(k)\,\dx\dt\,\d k\\
&=\frac{\mathcal{A}\rho^{\frac{N}p}}{(R-\rho)^N(T-\tau)}\iint_{Q_{R,T}}\vp(u)\,\dx\dt\\
&\le\frac{\mathcal{A}\rho^{\frac{N}p}(R^{N}T)^{1-\frac1p}}{(R-\rho)^N(T-\tau)}\|\vp(u)\|_{p,Q_{R,T}}.
\end{align*}
As a result,
\[
\essup_{-\tau<t<0}\int_{K_{\rho}}|\vp(\cdot,t)|^p\,dx\le\frac{\mathcal{A}}{T-\tau}\iint_{Q_{R,T}}|\vp(u)|^p\,\dx\dt.
\]

Next, we take the spatial gradient of both sides of \eqref{Eq:4:4}, then take the power $p$,
and integrate over $Q_{\rho,\tau}$ to obtain
\[
\iint_{Q_{\rho,\tau}}|D\vp(u)|^p\,\dx\dt=
\iint_{Q_{\rho,\tau}}\left|\int_\rr D(u-k)_-\vp^{\prime\prime}(k)\,dk\right|^p\,\dx\dt.
\]
The right-hand side is estimated by
\begin{align*}
\|D&\vp(u)\|_{p,Q_{\rho,\tau}}\le\int_{\rr}\|D(u-k)_-\|_{p,Q_{\rho,\tau}}\vp^{\prime\prime}(k)\,\d k\\
&\le\left[\frac{\gm}{(R-\rho)^p}+\frac{\gm}{(T-\tau)}\right]^{\frac1p}
\int_\rr\|(u-k)_-\|_{p, Q_{\frac{R+\rho}2,\frac{T+\tau}2}}\vp''(k)\,\d k\\
&\le\left[\frac{\gm}{(R-\rho)^p}+\frac{\gm}{(T-\tau)}\right]^{\frac1p}R^{\frac{N}p}T^{\frac1p}
\int_\rr \|(u-k)_-\|_{\infty, Q_{\frac{R+\rho}2,\frac{T+\tau}2}}\vp''(k)\,\d k\\
&\le\left[\frac{\gm}{(R-\rho)^p}+\frac{\gm}{(T-\tau)}\right]^{\frac1p}\frac{\mathcal{A}R^{\frac{N}p}T^{\frac1p}}{(R-\rho)^N(T-\tau)}
\int_\rr\iint_{Q_{R,T}}(u-k)_-\vp''(k)\,\d k\\
&=\left[\frac{\gm}{(R-\rho)^p}+\frac{\gm}{(T-\tau)}\right]^{\frac1p}\frac{\mathcal{A}R^{\frac{N}p}T^{\frac1p}}{(R-\rho)^N(T-\tau)}
\iint_{Q_{R,T}}\vp(u)\,\dx\dt\\
&\le\left[\frac{\gm}{(R-\rho)^p}+\frac{\gm}{(T-\tau)}\right]^{\frac1p}\frac{\mathcal{A}R^{N}T}{(R-\rho)^N(T-\tau)}
\|\vp(u)\|_{p,Q_{R,T}}\\
&\le\left[\frac{\gm\mathcal{A}}{(R-\rho)^p}+\frac{\gm\mathcal{A}}{(T-\tau)}\right]^{\frac1p}
\|\vp(u)\|_{p,Q_{R,T}}.
\end{align*}
If $\vp$ is convex, non-increasing and satisfying \eqref{Eq:4:3},
then $(\vp-l)_+$ verifies the same properties. Hence the desired
conclusion is reached by replacing $\vp$ with $(\vp-l)_+$.
\end{proof}
\begin{lemma}\label{Lm:log}
Let $u\in\mathfrak{A}_p^{-}(E,\gm)$ be  non-negative and bounded above by a positive constant $M$.
Then 
\begin{equation*}
\iint_{(y,s)+Q_{\rho,\tau}}|D\ln u|^p\,\dx\dt\le\left[\frac{\gm p}{(R-\rho)^p}
+\frac{\gm p}{T-\tau}\right]\iint_{(y,s)+Q_{R,T}}\ln\frac{M}{u}\,\dx\dt
\end{equation*}
for any pair of cubes $(y,s)+Q_{\rho,\tau}\subset(y,s)+Q_{R,T}\subset E$.
\end{lemma}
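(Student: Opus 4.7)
The plan is to run the $\mathfrak{A}_p^{-}$ energy inequality $\eqref{Eq:1:1}_-$ at a generic level $k\in\rr$, drop the $\essup$ term, and use the identity $|D(u-k)_-|^p=|Du|^p\chi_{\{u<k\}}$ a.e.\ to rewrite it as
\[
\iint_{(y,s)+Q_{\rho,\tau}}|Du|^p\chi_{\{u<k\}}\,\dx\dt\le\left[\frac{\gm}{(R-\rho)^p}+\frac{\gm}{T-\tau}\right]\iint_{(y,s)+Q_{R,T}}(u-k)_-^p\,\dx\dt.
\]
Multiplying both sides by the weight $p\,k^{-p-1}$ and integrating $k$ over $(0,M)$, Tonelli collapses the left-hand side to $\iint|Du|^p(u^{-p}-M^{-p})\,\dx\dt$, which is $\iint|D\ln u|^p\,\dx\dt$ up to the tail term $\iint|Du|^p/M^p\,\dx\dt$.

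For the right-hand side the inner $k$-kernel is $\int_u^M p(k-u)^p k^{-p-1}\,\d k$. The key computation is the sharp bound $\le p\ln(M/u)-(1-u^p/M^p)$, which I would derive from the elementary convexity inequality $(1-t)^p\le 1-t^p$ for $t\in[0,1]$ and $p\ge 1$, applied with $t=u/k$. The cruder estimate $(k-u)^p\le k^p$ would yield only $p\ln(M/u)$ and ultimately a constant $\gm(p+1)$ instead of $\gm p$, so this sharper bound is essential. To absorb the tail $\iint|Du|^p/M^p$, I apply the energy inequality once more at the single level $k=M$, observe $(u-M)_-=M-u$ since $u\le M$, and use $(1-u/M)^p\le 1-(u/M)^p$ to obtain
\[
\iint\frac{|Du|^p}{M^p}\,\dx\dt\le\left[\frac{\gm}{(R-\rho)^p}+\frac{\gm}{T-\tau}\right]\iint\left(1-\frac{u^p}{M^p}\right)\dx\dt.
\]
Adding this to the integrated estimate from the first step, the $\pm(1-u^p/M^p)$ correction terms cancel exactly and the stated inequality with constant $\gm p$ drops out.

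The main technical obstacle is that $u$ may vanish on a set of positive measure, where $1/u^p$ and $\ln(M/u)$ are not integrable and the integration in $k$ near $k=0$ diverges. I would handle this by integrating $k$ over $(\epsilon,M)$ rather than $(0,M)$ and sending $\epsilon\to 0^+$ at the end: if $\iint\ln(M/u)\,\dx\dt=\infty$ the conclusion is vacuous, and otherwise monotone convergence on both sides delivers the bound. The uses of Tonelli are routine since all integrands in the $k$-integration are non-negative.
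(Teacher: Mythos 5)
Your proposal is correct and follows essentially the same route as the paper: multiply the energy inequality $\eqref{Eq:1:1}_-$ by a $k^{-p-1}$ weight, integrate in $k$ over $(0,M)$, exchange the order of integration, and absorb the residual $\iint |Du|^p/M^p$ tail by one more application of $\eqref{Eq:1:1}_-$ at $k=M$ (noting $|D(u-M)_-|^p=|Du|^p$ since $u\le M$). The only technical variation is the estimate of the inner $k$-kernel on the right: the paper integrates by parts in $k$, which produces the boundary term $-\tfrac1p(M-u)_+^p/M^p$ matching the $k=M$ energy inequality exactly, whereas you deploy the convexity inequality $(1-t)^p\le 1-t^p$ twice to engineer the same cancellation; both yield the constant $\gm p$.
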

\begin{proof}
Assume $(y,s)=(0,0)$. According to \eqref{Eq:1:1}, for all $0<k<M$,
\begin{align*}
\iint_{Q_{\rho,\tau}}|D(u-k)_{-}|^p\,\dx\dt
\le\bigg[\frac{\gm}{(R-\rho)^p}+\frac{\gm}{(T-\tau)}\bigg]\iint_{Q_{R,T}}(u-k)_{-}^p\,\dx\dt.
\end{align*}
To proceed, we multiply both sides by $k^{-p-1}$ and integrate from $0$ to $M$.
The left-hand side becomes
\begin{align*}
\int_0^M&\frac{\d k}{k^{p+1}}\iint_{Q_{\rho,\tau}}|D(u-k)_{-}|^p\,\dx\dt\\
&=\iint_{Q_{\rho,\tau}}\int_0^M|D(u-k)_{-}|^p\frac{\d k}{k^{p+1}}\,\dx\dt\\
&=\iint_{Q_{\rho,\tau}}|Du|^p\int_u^M\frac{\d k}{k^{p+1}}\,\dx\dt\\
&=\iint_{Q_{\rho,\tau}}\left(-\frac1p\frac{|Du|^p}{M^p}+\frac1p\frac{|Du|^p}{u^p}\right)\,\dx\dt\\
&=\iint_{Q_{\rho,\tau}}|D\ln u|^p\,\dx\dt-\frac1{p M^p}\iint_{Q_{\rho,\tau}}|Du|^p\,\dx\dt.
\end{align*}
The integral on the right-hand side is estimated by
\begin{align*}
\int_0^M&\frac{\d k}{k^{p+1}}\iint_{Q_{R,T}}(u-k)_-^p\,\dx\dt\\
&=\iint_{Q_{R,T}}\int_0^M\frac{(k-u)_+^p}{t^{p+1}}\,\d k\,\dx\dt\\
&=\iint_{Q_{R,T}}\left[-\frac1p\frac{(k-u)_+^p}{k^p}\bigg|_u^M+\int_u^M\frac{(t-u)^{p-1}}{t^{p-1}}\frac{dt}{t}\right]\,\dx\dt\\
&\le-\frac1{pM^p}\iint_{Q_{R,T}}(M-u)^p_+\,\dx\dt+\iint_{Q_{R,T}}\ln\frac{M}{u}\,\dx\dt.
\end{align*}
Hence combining the above two estimates we arrive at
\begin{align*}
&\iint_{Q_{\rho,\tau}}|D\ln u|^p\,\dx\dt\\
&\quad\le \frac1{M^p}\left\{
\iint_{Q_{R,T}}|Du|^p\,\dx dt
-\bigg[\frac{\gm}{(R-\rho)^p}+\frac{\gm}{(T-\tau)}\bigg]\iint_{Q_{R,T}}(u-M)_{-}^p\,\dx\dt\right\}\\
&\qquad+\bigg[\frac{\gm p}{(R-\rho)^p}+\frac{\gm p}{(T-\tau)}\bigg]\iint_{Q_{R,T}}\ln\frac{M}{u}\,\dx\dt.
\end{align*}
Since $u\in\mathfrak{A}_p^{-}(E,\gm)$, the term in the curly bracket is non-positive and can be discarded.
\end{proof}
\begin{remark}\upshape
The appearance of a logarithmic integral on the right-hand side is natural.
Suppose $0<u\le M$ is a super-solution to the heat equation. If we formally multiply the equation
by $-u^{-1}\z^2$ where $\z$ is a standard cutoff function in $Q_{\rho}$ 
vanishing on $\pl Q_{\rho}$. Then
an integration over $Q_{\rho}$ followed by
a standard calculation yields
\[
\iint_{Q_\rho}\z^2\pl_t \ln\frac{M}{u}\,\dx\dt+\iint_{Q_\rho}\left|D\ln u\right|^2\z^2\,\dx\dt\le2\iint_{Q_\rho}\z D\ln u D\z\,\dx\dt.
\]
A further integration by parts in time and an application of Young's inequality would give us
\[
\iint_{Q_\rho}\left|D\ln u\right|^2\z^2\,\dx\dt\le C\iint_{Q_{\rho}}|D\z|^2\,\dx\dt+C\iint_{Q_{\rho}}\z|\z_t| \ln\frac{M}{u}\,\dx\dt.
\]
\end{remark}
\section{Time Propagation of Positivity in Measure}\label{S:4}
In this section, we examine the role of \eqref{Eq:1:2}.
First of all, we present a standard lemma which says \eqref{Eq:1:2} alone
is sufficient to propagate positivity of $u$ in measure for a short period of time (cf. cite{LSU}).
\begin{proposition}\label{Prop:5:1}
Suppose $u$ is non-negative and satisfies $\eqref{Eq:1:2}_-$.
Assume for $M>0$ and $\al\in(0,1)$, we have $(s,s+\rho^p]\times K_\rho(y)\subset E$ and
\[
|[u(\cdot, s)>M]\cap K_{\rho}(y)|\ge\al |K_\rho|.
\]
Then there exist $\dl,\,\eps\in(0,1)$ depending only on the data and $\al$, such that
\[
|[u(\cdot, t)>\eps M]\cap K_{\rho}(y)|\ge\frac{\al}2|K_\rho|
\]
for all times
\[
s<t<s+\dl\rho^p.
\]
\end{proposition}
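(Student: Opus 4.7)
The plan is to apply the $L^\infty$-in-time energy estimate $\eqref{Eq:1:2}_-$ at the single level $k=M$ on a short forward cylinder starting from time $s$. First, I would reinterpret the forward cylinder $K_\rho(y)\times(s,s+T]$ as $(y,s+T)+Q_{\rho,T}$ so that the ``initial slice'' at time $s$ appearing in $\eqref{Eq:1:2}_-$ coincides with the given time. Applying the inequality with inner cube $K_{\sigma\rho}(y)$ and outer cube $K_\rho(y)$, for some $\sigma\in(0,1)$ to be determined, yields for a.e.\ $t\in(s,s+T]$
\begin{align*}
\int_{K_{\sigma\rho}(y)}(u-M)_-^p(\cdot,t)\,\dx
&\le\int_{K_\rho(y)}(u-M)_-^p(\cdot,s)\,\dx\\
&\quad+\frac{\gm}{((1-\sigma)\rho)^p}\iint_{K_\rho(y)\times(s,s+T]}(u-M)_-^p\,\dx\dt.
\end{align*}

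Next I would bound each term on the right using the pointwise estimate $(u-M)_-\le M$, which holds because $u\ge 0$. The initial-slice integrand vanishes on $[u(\cdot,s)>M]\cap K_\rho(y)$, a set of measure at least $\alpha|K_\rho|$, and is hence bounded by $(1-\alpha)M^p|K_\rho|$. The tail integral is trivially at most $M^p|K_\rho|T$. Setting $T=\delta\rho^p$, one collects
\[
\int_{K_{\sigma\rho}(y)}(u-M)_-^p(\cdot,t)\,\dx\le M^p|K_\rho|\left[(1-\alpha)+\frac{\gm\delta}{(1-\sigma)^p}\right].
\]
To convert this $L^p$ bound into measure information, observe that on the set $A_t:=[u(\cdot,t)\le\epsilon M]\cap K_{\sigma\rho}(y)$ the integrand satisfies $(u-M)_-^p\ge(1-\epsilon)^p M^p$; hence
\[
(1-\epsilon)^p|A_t|\le|K_\rho|\left[(1-\alpha)+\frac{\gm\delta}{(1-\sigma)^p}\right].
\]
Since $[u(\cdot,t)\le\epsilon M]\cap K_\rho(y)\subset A_t\cup(K_\rho(y)\setminus K_{\sigma\rho}(y))$, the measure of the former is at most $|A_t|+(1-\sigma^N)|K_\rho|$.

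The final step is to tune the parameters so that this last quantity is at most $(1-\alpha/2)|K_\rho|$. The order of choice is forced: fix $\sigma=\sigma(\alpha)$ close enough to $1$ that $1-\sigma^N\le\alpha/8$; then pick $\epsilon=\epsilon(\alpha)$ small enough that $(1-\alpha)/(1-\epsilon)^p\le 1-3\alpha/4$; finally select $\delta=\delta(\mathrm{data},\alpha)$ so small that $\gm\delta/[(1-\epsilon)^p(1-\sigma)^p]\le\alpha/8$. Taking complements inside $K_\rho(y)$ then yields $|[u(\cdot,t)>\epsilon M]\cap K_\rho(y)|\ge(\alpha/2)|K_\rho|$ for all $t\in(s,s+\delta\rho^p]$, which is the claim. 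The calculation itself is routine; the only delicate point is this cascaded parameter tuning, which dictates in what order $\sigma$, $\epsilon$, and $\delta$ may be selected in terms of $\alpha$ and the data.
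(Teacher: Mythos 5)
Your proposal is correct and follows essentially the same route as the paper: apply $\eqref{Eq:1:2}_-$ at level $k=M$ on a forward cylinder of length $\delta\rho^p$ with a slightly smaller inner cube, bound the initial-slice term by $(1-\alpha)M^p|K_\rho|$ using the hypothesis, bound the tail integral crudely by $M^p|K_\rho|\delta\rho^p$, pass from the resulting $L^p$ bound to measure information at the level $\epsilon M$, account for the boundary annulus, and then tune $\sigma,\epsilon,\delta$ in terms of $\alpha$ and the data. The only cosmetic differences are the parametrization of the inner cube ($K_{\sigma\rho}$ vs.\ $K_{(1-\sigma)\rho}$) and that the paper keeps the factor $|[u<M]\cap Q_o|/|Q_o|$ in the tail term (which it later reuses in the proof of Proposition~\ref{Prop:5:2}), whereas you discard it immediately — harmless for the present statement.
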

\begin{proof}
Assume $(y,s)=(0,0)$. We may apply $\eqref{Eq:1:2}_-$ with $k=M$
in the cylinders 
$$K_{(1-\sig)\rho}\times(0,\dl\rho^p]\subset K_{\rho}\times(0,\dl\rho^p]\df{=}Q_o;$$
in such a case, we have for all $0<t<\dl\rho^p$,
\begin{align*}
\int_{K_{(1-\sig)\rho}} (u(\cdot,t)-M)^p_-\,\dx&\le\int_{K_\rho} (u(x,0)-M)^p_-\,\dx
+\frac{\gm}{(\sig\rho)^p}\iint_{Q_o}(u-M)^{p}_-\,\dx\dt\\
&\le\int_{K_\rho} (u(x,0)-M)^p_-\,\dx+\gm\frac{k^p}{(\sig\rho)^p}|[u<M]\cap Q_o|\\
&\le M^p\left[1-\al+\gm\frac{\dl}{\sig^p}\frac{|[u<M]\cap Q_o|}{|Q_o|}\right]|K_\rho|.
\end{align*}

Set $l=\eps M$. The left-hand side of the above estimate can be bounded from below by
\begin{align*}
\int_{K_{(1-\sig)\rho}\cap[u\le l]} (u(\cdot,t)-M)^p_-\,\dx\ge (1-\eps)^p M^{p}|A_{l,(1-\sig)\rho}(t)|
\end{align*}
where we have defined for some $\eps$ to be chosen
\[
A_{l,(1-\sig)\rho}(t)=[u(\cdot,t)\le\eps M]\cap K_{(1-\sig)\rho}.
\]
Notice that
\begin{align*}
|A_{l,\rho}(t)|&=|A_{l,(1-\sig)\rho}(t)\cup (A_{l,\rho}(t)-A_{l,(1-\sig)\rho}(t))|\\
&\le |A_{l,(1-\sig)\rho}(t)|+|K_\rho- K_{(1-\sig)\rho}|\\
&\le |A_{l,(1-\sig)\rho}(t)|+N\sig |K_\rho|.
\end{align*}
Collecting all the above estimates yields that
\begin{equation}\label{Eq:5:0}
|A_{l,\rho}(t)|\le \frac{1-\al}{(1-\eps)^p}|K_\rho|
+C\frac{\dl}{\sig^p}\frac{|[u<M]\cap Q_o|}{|Q_o|}|K_\rho|
+N\sig|K_\rho|
\end{equation}
Finally we may choose $\eps$, $\sig$ and $\dl$, such that
\[
\frac{1-\al}{(1-\eps)^p}\le1-\frac{3}4\al,\quad N\sig=\frac{\al}8,\quad C\frac{\dl}{\sig^p}\le\frac{\al}8.
\]
\end{proof}
\begin{remark}\label{Rmk:5:1}\upshape
One easily obtains the dependence of various constants on $\al$ from  the above proof.
Namely, $\eps\approx\al$, $\sig\approx\al$ and $\dl\approx\al^{p+1}$.
\end{remark}
One wonders if the positivity in measure can be propagated
further in time, i.e., $\dl$ can be made large by choosing
a proper $\eps$. It seems $\eqref{Eq:1:2}_-$ alone is insufficient.
In the theory of parabolic equations, a standard tool to achieve this is
a logarithmic estimate. See \cite[Chapter 2, Section 3]{DB}.
We do not know if such a logarithmic estimate holds for functions in parabolic De Giorgi
classes. However we show in the following that a membership in $u\in\mathfrak{B}^-_p(E,\gm)$
still ensures that the measure information of positivity propagates further in time.
\begin{proposition}\label{Prop:5:2}
Suppose $u\in\mathfrak{B}^-_p(E,\gm)$ is non-negative.
Assume for $A,\,M>0$ and $\al\in(0,1)$, we have $(s,s+A\rho^p]\times K_\rho(y)\subset E$ and
\[
|[u(\cdot, s)>M]\cap K_{\rho}(y)|\ge\al |K_\rho|.
\]
Then there exist $\eps>0$ depending on the data and $\al$, such that
\[
|[u(\cdot, t)>\eps M]\cap K_{\rho}(y)|\ge\frac{\al}2|K_\rho|
\]
for all
\[
s<t<s+A\rho^p.
\]
\end{proposition}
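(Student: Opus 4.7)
The plan is to apply the forward-in-time energy estimate $\eqref{Eq:1:2}_-$ directly on the long cylinder $K_\rho(y)\times(s,s+A\rho^p]$. The essential feature of $\mathfrak{B}^-_p$ being exploited, as opposed to just $\mathfrak{A}^-_p$, is that $\eqref{Eq:1:2}_-$ has no $\gm/(T-\tau)$ term that would blow up as the time interval is elongated; only the spatial penalty $\gm/(R-\rho)^p$ remains. This is precisely what makes propagation of measure information over an arbitrarily long time scale even conceivable.

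Without loss of generality assume $(y,s)=(0,0)$. First I apply $\eqref{Eq:1:2}_-$ with $k=M$, outer cube $K_\rho$ and inner cube $K_{(1-\sigma)\rho}$ for a $\sigma\in(0,1)$ to be fixed. Using $u\ge 0$, so that $(u-M)_-\le M$ pointwise, together with the hypothesis at time $0$, this gives
\[
\essup_{0<t<A\rho^p}\int_{K_{(1-\sigma)\rho}}(u-M)_-^p(\cdot,t)\,\dx \le M^p|K_\rho|\Big[(1-\alpha)+\frac{\gm A}{\sigma^p}\Big].
\]
For a candidate level $\eps M$ with $0<\eps<1$, the pointwise inequality $(1-\eps)^p M^p\chi_{[u<\eps M]}\le(u-M)_-^p$ converts this into a measure bound on $[u(\cdot,t)<\eps M]\cap K_{(1-\sigma)\rho}$; adding the boundary correction $N\sigma|K_\rho|$ from $K_\rho\setminus K_{(1-\sigma)\rho}$ yields
\[
|[u(\cdot,t)<\eps M]\cap K_\rho| \le \Big\{\frac{1}{(1-\eps)^p}\Big[(1-\alpha)+\frac{\gm A}{\sigma^p}\Big]+N\sigma\Big\}|K_\rho|.
\]
One then sets $N\sigma=\alpha/8$ and picks $\eps$ small so that the bracketed quantity is at most $1-\alpha/2$, which is the desired conclusion.

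The main obstacle is the term $\gm A/\sigma^p$: once $\sigma\sim\alpha/N$ is fixed to absorb the boundary loss, this direct closure only works when $A$ is bounded in terms of $\alpha$, roughly $A\lesssim\alpha^{p+1}$. For general $A$ the argument must be iterated, but a naive iteration of Proposition~\ref{Prop:5:1} halves the fraction and shrinks the level at each step, and the resulting geometric series of admissible lengths sums to a finite total time, so that alone is not enough. Overcoming this, while keeping $\eps$ depending only on the data and $\alpha$, requires leveraging the $C((s-T,s);L^p(K_R))$ continuity from $\mathfrak{B}^-_p$ to transmit the measure information across sub-intervals without re-halving the fraction; stabilizing both the fraction and the level through the iteration is the delicate technical point of the proof.
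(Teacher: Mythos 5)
Your setup and diagnosis of the difficulty are both correct: applying $\eqref{Eq:1:2}_-$ with $k=M$ and the test pair $K_{(1-\sigma)\rho}\subset K_\rho$ gives exactly the inequality you display, the boundary correction $N\sigma|K_\rho|$ is the right price to pay, and the obstacle you name — that the term $\gm A/\sigma^p$ forces $A\lesssim\sigma^p\sim\alpha^p$ once $\sigma$ is fixed to absorb the boundary loss — is indeed the crux. You also correctly observe that blind iteration of Proposition~\ref{Prop:5:1} halves the measure fraction at each step and only covers a geometrically summable, hence finite, total time.

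The gap is in how you propose to resolve this. You attribute the resolution to ``leveraging the $C((s-T,s);L^p(K_R))$ continuity'' of $\mathfrak{B}^-_p$, but that continuity is not what stabilizes the fraction; it only guarantees that time slices are well-defined. The two ideas the paper actually uses, and which your proposal never produces, are these. First, in the one-step estimate~\eqref{Eq:5:0} the parameter $\sigma$ is not fixed a priori in terms of $\alpha$ alone, but is chosen to \emph{depend on the measure of the sub-level set} in the time slab, namely $\sigma=\dl^{1/(p+1)}\big(|[u<k]\cap Q_o|/|Q_o|\big)^{1/(p+1)}$; this makes the penalty term $C\dl/\sigma^p\cdot|[u<k]\cap Q_o|/|Q_o|$ collapse into the single quantity $C\dl^{1/(p+1)}\big(|[u<k]\cap Q_o|/|Q_o|\big)^{1/(p+1)}$, which can be driven small independently of how many iterations have already been performed. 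Second, to make that measure ratio small at a suitably low truncation level one uses the De~Giorgi shrinking lemma (Lemma~\ref{Lm:5:1}), applied between levels $M_{i-1}/2^{j}$, to convert the already-established slab information $|[u(\cdot,t)>\eps M_{i-1}]\cap K_\rho|\ge\tfrac12\al|K_\rho|$ into the quantitative bound $|[u<M_{i-1}/2^{j_i}]\cap Q_i|/|Q_i|\le C/(\al\dl^{1/p}j_i^{(p-1)/p})$, and then one picks $j_i$ and $n_i$ (and hence the new level $l_i=M_i/2^{n_i+j_i}$) so large that the degradation of the fraction per time step $\dl\rho^p$ is at most $\dl\al/(2A)$. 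After roughly $A/\dl$ steps the fraction has degraded from $\al$ to no less than $\al/2$, which is what keeps the conclusion alive for all of $(s,s+A\rho^p]$. Without the coupled choice of $\sigma$ and without invoking Lemma~\ref{Lm:5:1}, there is no mechanism in your argument to prevent the per-step halving of the fraction, and the proof does not close. (Note also that, through the choices of $j_i,n_i$ and the number of iterations, the constant $\eps$ in the conclusion in fact acquires a dependence on $A$, not merely on the data and $\al$.)
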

\subsection{Shrinking the Measure of the Set $[u\approx0]$}
We first prove the following shrinking lemma due to De Giorgi (cf. \cite{DG}).
\begin{lemma}\label{Lm:5:1}
Let $\al,\,\dl\in(0,1)$.
Suppose there holds
\[
\left|\left[u(\cdot, t)>M\right]\cap K_{\rho}\right|\ge\al |K_\rho|
\quad\text{ for all }t\in(s,s+\dl\rho^p].
\]
There exists $C>0$ depending only on the data, such that for any positive integer $j_*$,
we have
\[
\left|\left[u\le\frac{M}{2^{j_*}}\right]\cap Q\right|\le\frac{C}{\al \dl^{\frac1p} j_*^{\frac{p-1}p}}|Q|,\quad
\text{ where }Q=K_{\rho}\times\left(s,s+\dl\rho^p\right].
\]
\end{lemma}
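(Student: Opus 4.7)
The plan is to follow the classical De~Giorgi truncation scheme for the shrinking lemma, combining a pointwise-in-time isoperimetric inequality on $K_\rho(y)$ with the energy estimate \eqref{Eq:1:1} available from the hypothesis $u\in\mathfrak{A}_p^-(E,\gm)$ (implicit from the context of Proposition~\ref{Prop:5:2}). The engine is the pair of dyadic truncation levels $k_j=M/2^j$ for $j=0,1,\ldots,j_*$.

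For each $t\in(s,s+\dl\rho^p]$ set $A_j(t)=[u(\cdot,t)\le k_j]\cap K_\rho(y)$ and $B_j(t)=[k_j<u(\cdot,t)<k_{j-1}]\cap K_\rho(y)$. Since $k_{j-1}\le M$, the hypothesis gives $|K_\rho\setminus A_{j-1}(t)|\ge\al|K_\rho|$, and De~Giorgi's isoperimetric inequality applied to $u(\cdot,t)$ on $K_\rho(y)$ with the pair $k_j<k_{j-1}$ yields $(k_{j-1}-k_j)|A_j(t)|\le C\rho\al^{-1}\int_{B_j(t)}|Du(\cdot,t)|\,\dx$. Integrating in $t$ and writing $A_j$, $B_j$ for the corresponding space-time subsets of $Q$, one arrives at
\[
k_j\,|A_j|\le\frac{C\rho}{\al}\iint_{B_j}|Du|\,\dx\dt.
\]

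Next I would bound the right-hand side by H\"older as $\|D(u-k_{j-1})_-\|_{p,Q}\,|B_j|^{(p-1)/p}$ and feed in the energy inequality on an enlargement $Q'=K_{2\rho}(y)\times(s-\dl\rho^p,s+\dl\rho^p]$ (arranged, if necessary, by shifting the base slightly inside $E$). Because $(u-k_{j-1})_-\le k_{j-1}$ everywhere, the energy estimate collapses to $\iint_Q|D(u-k_{j-1})_-|^p\,\dx\dt\le C k_{j-1}^p\rho^N$. Using $k_{j-1}=2k_j$ and simplifying gives
\[
|A_j|\le\frac{C\rho^{1+N/p}}{\al}\,|B_j|^{(p-1)/p}.
\]

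To close, I would use two facts at once: the rings $B_j$ are pairwise disjoint in $j$, so $\sum_{j=1}^{j_*}|B_j|\le|Q|$; and $A_{j_*}\subset A_j$ for every $j\le j_*$. Raising the previous inequality to the power $p/(p-1)$ and summing in $j$ produces $j_*|A_{j_*}|^{p/(p-1)}\le C(\rho^{1+N/p}/\al)^{p/(p-1)}|Q|$; extracting $|A_{j_*}|$ and using $\rho^{p+N}\approx|Q|/\dl$ gives the claimed $|A_{j_*}|\le C|Q|\,\al^{-1}\dl^{-1/p}j_*^{-(p-1)/p}$. The most delicate bookkeeping is the propagation of $\dl$: the coefficient of the energy inequality carries a $\dl^{-1}$ from $T-\tau\sim\dl\rho^p$, the enlarged domain $|Q'|$ carries a compensating $\dl$, and the final $\dl^{-1/p}$ emerges only after inverting the summed inequality (which introduces the additional $|Q|^{(p-1)/p}\sim\dl^{(p-1)/p}$). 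This is the step where I would expect to stumble if constants were not tracked carefully.
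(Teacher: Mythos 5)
Your proposal reproduces the paper's argument essentially verbatim: dyadic levels $k_j=M/2^j$, the pointwise-in-time De~Giorgi isoperimetric inequality anchored by the lower measure bound at each slice, the energy estimate $\eqref{Eq:1:1}_-$ in the expanded cylinder $K_{2\rho}\times(s-\delta\rho^p,s+\delta\rho^p]$ to bound $\|D(u-k_{j-1})_-\|_{p,Q}$ by $Ck_{j-1}\rho^{N/p}$, H\"older on the ring, and summing the inequality raised to the power $p/(p-1)$ over disjoint rings. The only cosmetic difference is that you track the rings $B_j$ explicitly while the paper uses the equivalent telescoping differences $|A_j|-|A_{j+1}|$; the index offset by one and the $\delta$-bookkeeping you flag are all handled identically.
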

\begin{proof}
We assume $(y,s)=(0,0)$ and set $k_j=2^{-j}M$ for $j=0,1,\cdots, j_*$. Apply $\eqref{Eq:1:1}_-$ for 
the pair of cylinders
\[
K_{\rho}\times(0,\dl\rho^p]\subset K_{2\rho}\times(-\dl\rho^p,\dl\rho^p],
\]
such that
\begin{equation}\label{Eq:5:1}
\iint_{Q}|D(u-k_j)_-|^p\,\dx\dt\le\frac{C}{\dl\rho^p}\left(\frac{M}{2^j}\right)^{p}|Q|.
\end{equation}
Next, we apply \cite[Chapter I, Lemma 2.2]{DB}
to $u(\cdot,t)$ for 
$t\in\left(0,\dl\rho^p\right]$
 over the cube $K_\rho$,
for levels $k_{j+1}<k_{j}$.
Taking into account the measure theoretical information
\[
\left|\left[u(\cdot, t)>M\right]\cap K_{\rho}\right|\ge\al |K_\rho|
\quad\text{ for all }t\in(0,\bar{\dl}\theta\rho^2],
\]
this gives
\begin{align*}
\frac{M}{2^{j+1}}&|[u(\cdot,t)<k_{j+1}]\cap K_\rho|\\
&\le\frac{C \rho^{N+1}}{|[u(\cdot,t)>k_j]\cap K_\rho|}\int_{[k_j<u(\cdot,t)<k_{j+1}]\cap K_{\rho}}|Du|\,\dx\\
&\le\frac{C\rho}{\al}\bigg(\int_{[k_j<u(\cdot,t)<k_{j+1}]\cap K_{\rho}}|Du|^p\,\dx\bigg)^{\frac1p}\\
&\quad\times|([u(\cdot,t)<k_j]-[u(\cdot,t)<k_{j+1}])\cap K_\rho|^{\frac{p-1}p}.
\end{align*}
Set
\[
A_j=[u<k_j]\cap Q
\]
and integrate the above estimate in $\dt$ over $(0,\dl\rho^p]$;
we obtain by using \eqref{Eq:5:1} 
\begin{align*}
\frac{M}{2^j}|A_{j+1}|&\le\frac{C\rho}{\al}\bigg(\iint_{Q}|D(u-k_j)_-|^p\,\dx\dt\bigg)^\frac1p(|A_j|-|A_{j+1}|)^\frac{p-1}p\\
&\le\frac{C}{\al\dl^{\frac1p}}\frac{M}{2^j}|Q|^{\frac1p}(|A_j|-|A_{j+1}|)^\frac{p-1}p.
\end{align*}
Now take the power $\frac{p}{p-1}$ on both sides of the above inequality to obtain
\[|A_{j+1}|^{\frac{p}{p-1}}\le\frac{C}{\al^{\frac{p}{p-1}}\dl^{\frac1{p-1}}}|Q|(|A_j|-|A_{j+1}|).\]
Add these inequalities from $0$ to $j_*-1$ to obtain
\[
j_* |A_{j_*}|^{\frac{p}{p-1}}\le\sum_{j=0}^{j_*-1}|A_{j+1}|^{\frac{p}{p-1}}\le\frac{C}{\al^{\frac{p}{p-1}}\dl^{\frac1{p-1}}}|Q|^2.
\]
From this we conclude
\[|A_{j_*}|\le\frac{C}{\al \dl^{\frac1p} j_*^{\frac{p-1}p}}|Q|.\]
\end{proof}
\subsection{Proof of Proposition~\ref{Prop:5:2}}
 We come back at \eqref{Eq:5:0} and choose
\begin{align*}
\sig=\dl^{\frac1{p+1}}\left(\frac{|[u<k]\cap Q_o|}{|Q_o|}\right)^{\frac1{p+1}},
\end{align*}
such that \eqref{Eq:5:0} becomes
\[
|A_{l,\rho}(t)|\le\bigg[\frac{1-\al}{(1-\eps)^p}
+C\dl^{\frac1{p+1}}\left(\frac{|[u<k]\cap Q_o|}{|Q_o|}\right)^{\frac1{p+1}}\bigg]|K_\rho|.
\]
We choose $\dl$ and $\eps$ such that
\begin{equation}\label{Eq:5:2}
C\dl^{\frac1{p+1}}=\frac{\al}{8},
\quad\frac{1-\al}{(1-\eps)^p}<\frac{1-\frac12\al}{(1-\eps)^p}\le1-\frac{1}{4}\al.
\end{equation}
As a result, we obtain
\[
|A_{l,\rho}(t)|\le\left(1-\frac{\al}{8}\right)|K_\rho|\quad\text{ for all }s\le t\le s_1\df{=}s+\dl\rho^p.
\]
Having $\eps$ and $\dl$
determined in \eqref{Eq:5:2}, we use $\eqref{Eq:1:2}_-$ again and
repeat the above argument with
 \[
 M_1=\eps M,\quad l_1=\frac{M_1}{2^{n_1+j_1}},\quad
 k_1=\frac{M_1}{2^{j_1}},
 \]
 where $j_1$ and $n_1$ are positive numbers to be determined.
 We may use the above measure theoretical information
 for $t\in[s,s_1]$, and apply Lemma~\ref{Lm:5:1}
 to obtain a refined estimate:
 \begin{align*}
|A_{l_1,\rho}(t)|
\le\bigg[\frac{1-\al}{(1- 2^{-n_1})^2}
+C\dl^{\frac1{p+1}}\left(\frac{1}{\al\dl^{\frac1p}j_1^{\frac{p-1}p}}\right)^{\frac1{p+1}}\bigg]|K_\rho|
\quad\text{ for all } t\in[0, s_1].
\end{align*}
We choose $j_1$ and $n_1$,
such that
\[
C\dl^{\frac1{p+1}}\left(\frac{1}{\al\dl^{\frac1p}j_1^{\frac{p-1}p}}\right)^{\frac1{p+1}}\le\frac{\dl\al}{4A},
\quad\frac{1-\al}{(1-2^{-n_1})^2}\le1-\al+\frac{\dl\al}{4A}.
\]
As a result, we obtain that 
\[
|A_{l_1,\rho}(t)|\le\left(1-\al+\frac{\dl\al}{2A}\right)|K_\rho|
\quad\text{ for all }t\in[s,s_1].
\]
Now we may proceed by induction. Suppose the construction has been made 
up to the $(i-1)$-th step:
the sequences $\{M_i\}$, $\{n_i\}$ and $\{j_i\}$ have been chosen up to the $(i-1)$-th step,
and we have the measure theoretical information
\[
|A_{l_{i-1},\rho}(t)|\le\left(1-\al+(i-1)\frac{\dl\al}{2A}\right)|K_\rho|
\quad\text{ for all }t\in[s_{i-1},s_i],
\]
where
\[
 l_{i-1}=\frac{ M_{i-1}}{2^{n_{i-1}+j_{i-1}}}\df{=}\widehat{M}_{i-1}.
\]
Setting 
\[
l^{\eps}_{i-1}=\eps\widehat{M}_{i-1},\quad
 s_{i+1}=s_i+\dl\rho^{p},\quad Q_i=K_{\rho}\times (s_i, s_{i+1}],
\]
and using the above measure theoretical information at $t=s_i$,
we can repeat the above argument to obtain for all $t\in[s_{i},s_{i+1}]$
\[
|A_{l^{\eps}_{i-1},\rho}(t)|\le\bigg[\frac{1-\al+(i-1)\frac1{2A}\dl\al}{(1-\eps)^2}
+C\dl^{\frac1{p+1}}\left(\frac{|[u<l_{i-1}]\cap Q_i|}{|Q_i|}\right)^{\frac1{p+1}}\bigg]|K_\rho|.
\]
Assuming $(i-1)\dl<A$, we may choose $\eps$ and $\dl$ as in \eqref{Eq:5:2}; this ensures
\[
|A_{l^{\eps}_{i-1},\rho}(t)|\le\left(1-\frac{\al}{8}\right)|K_\rho|\quad\text{ for all }t\in[s_i,s_{i+1}].
\]
Now we set
 \begin{align*}
 & M_i=\eps\widehat{M}_{i-1},\quad l_i=\frac{ M_i}{2^{n_i+j_i}},\quad
  k_i=\frac{M_i}{2^{j_i}},
 \end{align*}
  where $j_i$ and $n_i$ are to be determined.
Then we use the above measure theoretical information in Lemma~\ref{Lm:5:1}
 to obtain a refined estimate: for all $t\in[s_i,s_{i+1}]$
\begin{align*}
|A_{l_i,\rho}(t)|&\le\bigg[\frac{1-\al+(i-1)\frac{1}{2A}\dl\al}{(1-2^{-n_i})^2}
+C\dl^{\frac1{p+1}}\left(\frac{1}{\al\dl^{\frac1p}j_i^{\frac{p-1}p}}\right)^{\frac1{p+1}}\bigg]|K_\rho|.
\end{align*}
We choose $j_i$ and $n_i$,
such that
\begin{align*}
&C\dl^{\frac1{p+1}}\left(\frac{1}{\al\dl^{\frac1p}j_i^{\frac{p-1}p}}\right)^{\frac1{p+1}}\le\frac{\dl\al}{4A},\\
&\frac{1-\al+(i-1)\frac{1}{2}\dl\al}{(1-2^{-n_i})^2}\le1-\al
+(i-1)\frac{\dl\al}{2}+\frac{\dl\al}{4A}.
\end{align*}
As a result, we obtain that for all times $t\in[s_i,s_{i+1}]$
\[
|A_{l_i,\rho}(t)|\le\left(1-\al+i\frac{\dl\al}{2A}\right)|K_\rho|.
\]
The above argument terminates if $i\dl\ge A$ and we reach the desired conclusion
with the choice
\[\eps M=\frac{ M_i}{2^{n_i+j_i}}.\]
\section{H\"older Continuity for Functions in $\mathfrak{B}_p$}\label{S:5}
\begin{theorem}\label{Thm:Holder}
If $u\in \mathfrak{B}_p(E;\gm)$, there are constants $C>0$ and $0<\be<1$ depending only on the data, such that
for every pair of cylinders $(y,s)+Q_\rho\Subset (y,s)+Q_R\Subset E$, we have
\begin{equation*}
\essosc_{(y,s)+Q_\rho}u\le C\essosc_{(y,s)+Q_R}u\cdot\bigg(\frac{\rho}{R}\bigg)^{\be}
\end{equation*}
\end{theorem}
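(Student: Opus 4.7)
The plan is to establish the standard oscillation-decay estimate
\[
\essosc_{(y,s)+Q_{R/4}}u\le(1-\eta)\essosc_{(y,s)+Q_R}u
\]
for some $\eta\in(0,1)$ depending only on the data, and then to iterate it on dyadically nested cylinders to obtain H\"older continuity with exponent $\beta=-\log_4(1-\eta)$. The decay itself will come from Moser's route: propagate a one-sided measure bound forward in time via Proposition~\ref{Prop:5:2}, upgrade it to a near-full-measure statement using the logarithmic estimate of Lemma~\ref{Lm:log}, and convert the result to a pointwise bound via the critical mass Lemma~\ref{Lm:critical-mass}.

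First, fix $(y,s)+Q_R\Subset E$, let $\mu^{\pm}$ denote the essential supremum and infimum of $u$ on $(y,s)+Q_R$, and $\omega=\mu^+-\mu^-$. At the bottom time $t_0=s-R^p$, at least one of the sets
\[
\{u(\cdot,t_0)\ge\tfrac12(\mu^++\mu^-)\}\cap K_R(y),\qquad \{u(\cdot,t_0)\le\tfrac12(\mu^++\mu^-)\}\cap K_R(y)
\]
has measure at least $\tfrac12|K_R|$. Assuming the first (the second is symmetric using $\mu^+-u\in\mathfrak{B}^-_p(E,\gm)$), set $v:=u-\mu^-$; then $v$ is a non-negative member of $\mathfrak{B}^-_p(E,\gm)$ bounded above by $\omega$, and $|\{v(\cdot,t_0)>\omega/2\}\cap K_R(y)|\ge\tfrac12|K_R|$. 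Proposition~\ref{Prop:5:2}, applied with $\alpha=\tfrac12$ and $A=1$, then yields $\eps\in(0,1)$, depending only on the data, such that
\[
|\{v(\cdot,t)>\tfrac{\eps\omega}{2}\}\cap K_R(y)|\ge\tfrac14|K_R|\quad\text{for all }t\in(t_0,s].
\]

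Next, I apply the logarithmic estimate of Lemma~\ref{Lm:log} to $v+\xi$, with upper bound $M=\omega+\xi$ and $\xi>0$ a small regularizer, on a pair of concentric cylinders $(y,s)+Q_{R/2}\subset(y,s)+Q_R$. Combined with a weighted Sobolev--Poincar\'e inequality applied on each time slice (where the measure of the positivity set is uniformly bounded below by the previous step), this produces a De Giorgi-type shrinking of the bad set: for every preassigned $\nu>0$ there exists an integer $j_*$, depending only on the data and $\nu$, such that
\[
\bigl|\{v\le\eps\omega/2^{j_*+1}\}\cap[(y,s)+Q_{R/2}]\bigr|\le\nu\,|Q_{R/2}|.
\]
Sending $\xi\to0$ removes the regularizer. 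Choosing $\nu$ to match the threshold of Lemma~\ref{Lm:critical-mass} applied to $v$ with $a=\tfrac12$ then delivers $v\ge\eps\omega/2^{j_*+2}$ on $(y,s)+Q_{R/4}$, which rereads as the desired oscillation reduction with $\eta=\eps/2^{j_*+2}$.

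The main obstacle will be the shrinking step: Lemma~\ref{Lm:log} produces only an $L^1$ logarithmic gradient bound, whereas the critical mass lemma requires a genuine smallness-of-measure conclusion. Bridging the gap calls for an iteration consuming one logarithmic factor per dyadic level in the spirit of Lemma~\ref{Lm:5:1}. The uniform lower bound on $|\{v(\cdot,t)>\eps\omega/2\}|$ supplied by Proposition~\ref{Prop:5:2} is exactly what feeds the slicewise Poincar\'e inequality and makes the conversion independent of $\omega$ and $R$, so that the oscillation-decay factor $1-\eta$ is truly homogeneous and the iteration produces a genuine H\"older estimate.
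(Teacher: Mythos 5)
Your plan has a genuine gap at its crux, and you have already identified it yourself: after obtaining an $L^1$ bound for $|w_1|$ (via Lemma~\ref{Lm:log} and the slicewise Poincar\'e inequality), you still need to turn this into information that the critical mass Lemma~\ref{Lm:critical-mass} can consume, and you propose to do so by ``an iteration consuming one logarithmic factor per dyadic level in the spirit of Lemma~\ref{Lm:5:1}.'' But this step is neither carried out nor, in fact, the right move: if you go to a shrinking lemma and then to the critical mass lemma, you are no longer on Moser's route but have fallen back to De Giorgi's, which is precisely what the paper's Section~\ref{S:5}.1 sets out to avoid (the paper states explicitly that ``the heavy machinery of De Giorgi, such as Lemma~\ref{Lm:critical-mass} and Lemma~\ref{Lm:5:1}, is avoided''). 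Moreover, deriving a genuine measure-smallness statement from an $L^1$ bound on $w_1$ is not elementary; it would typically require something of John--Nirenberg type, and nothing in the paper supports that shortcut.

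The paper's actual closing steps are quite different and bypass measure smallness entirely. Having established
\[
\frac{1}{(\delta\rho)^{N+p}}\iint_{Q_{\delta\rho}}|w_1|\,\dx\dt\le C,
\]
one observes that $w_{1+}=\big(\ln\frac{\eps\omega}{2(\mu^+-u)}\big)_+$ is a convex, non-decreasing function of $u$, so Lemma~\ref{Lm:4:1} places $w_{1+}$ in the generalized class $\mathfrak{A}_p^+$. Theorem~\ref{Thm:Bd} then applies directly to $w_{1+}$, converting the $L^1$ average into an $L^\infty$ bound on $Q_{\delta\rho/2}$; exponentiating gives $\essup_{Q_{\delta\rho/2}}u\le\mu^+-\frac{\eps}{2e^C}\omega$, hence the oscillation reduction. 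This use of the boundedness theorem on the logarithmic auxiliary function, made possible by the composition-preservation lemma, is the genuinely Moser ingredient and the missing piece in your plan. (A minor further point: you invoke Proposition~\ref{Prop:5:2} with $A=1$; the paper gets by with the shorter Proposition~\ref{Prop:5:1}, since the measure information is only needed on $(-\delta\rho^p,0)$, which already contains $Q_{\delta\rho}$.)
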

\noi For a function $u\in \mathfrak{B}_p(E;\gm)$ and $(y,s)+Q_{2\rho}\Subset E$, we set
\begin{equation*}
\mu^+=\essup_{(y,s)+Q_{2\rho}}u,\quad\mu^-=\essinf_{(y,s)+Q_{2\rho}}u,
\quad\om(2\rho)=\essosc_{(y,s)+Q_{2\rho}}u=\mu^+-\mu^-.
\end{equation*}
\subsection{Proof by Moser's Approach}
The purpose of this section is to prove Theorem~\ref{Thm:Holder}
using an intuitive idea of Moser. Thus the heavy machinery of De Giorgi, such as
Lemma~\ref{Lm:critical-mass} and Lemma~\ref{Lm:5:1}, is avoided.
A similar adaption has been made to parabolic equations in \cite{Le}, which
however cannot be directly generalized to parabolic De Giorgi classes. 

Without loss of generality, we may take $(y,s)=(0,0)$.
For ease of notation, we write $\om=\om(2\rho)$. Let $\eps$ be the number determined
in Proposition~\ref{Prop:5:1} with $\al=1/2$. We introduce two functions:
\begin{equation*}
w_1\df{=}\vp_1(u)=\ln \frac{\eps\om}{2(\mu^+-u)},\qquad w_2\df{=}\vp_2(u)=\ln \frac{\eps\om}{2(u-\mu^-)}.
\end{equation*}
We first apply Lemma~\ref{Lm:log} to $w_1$ and $w_2$.
Indeed, since $u\in\mathfrak{A}_p(E;\gm)$ we have both $\mu^+-u$
and $u-\mu^-$ members of $\mathfrak{A}_p^-(E;\gm)$. 
 Therefore, we may apply Lemma~\ref{Lm:log} to $\mu^+-u$ in $Q_R$ with $\rho<R<2\rho$,
to obtain
\[
\iint_{Q_{\rho}}\bigg|D\ln \frac{\eps\om}{2(\mu^+-u)}\bigg|^p\,\dx\dt
\le\frac{C}{(R-\rho)^p}\iint_{Q_{R}}\ln\frac{\om}{\mu^+-u}\,\dx\dt,
\]
that is, in terms of $w_1$,
\begin{equation}\label{Eq:6:1}
\iint_{Q_{\rho}}|Dw_1|^p\,\dx\dt\le\frac{C}{(R-\rho)^p}\iint_{Q_{R}}|w_1|\,\dx\dt+\frac{CR^{N+p}}{(R-\rho)^p}.
\end{equation}
Similar inequality holds for $w_2$.

Now we go with two alternatives: either 
\[
\left|\left[\mu^+-u(\cdot, -\dl\rho^p)\ge\frac{\om}2\right]\cap K_{\rho}\right|\ge\frac12|K_\rho|,
\]
or
\[
\left|\left[u(\cdot, -\dl\rho^p)- \mu^-\ge\frac{\om}2\right]\cap K_{\rho}\right|\ge\frac12|K_\rho|,
\]
where $\dl$ is the constant appearing in Proposition~\ref{Prop:5:1}
with $\al=1/2$.
Let us suppose for instance the first case holds.
According to Proposition~\ref{Prop:5:1}, we have
\[
\left|\left[\mu^+-u(\cdot, t)\ge\frac{\eps\om}2\right]\cap K_{\rho}\right|\ge\frac14|K_\rho|
\quad\text{ for all } -\dl\rho^p<t<0.
\]
In terms of $w_1$, this may be rephrased as 
\[
\left|\left[w_1(\cdot, t)\le 0\right]\cap K_\rho\right|\ge\frac14|K_\rho|\quad\text{ for all } -\dl\rho^p<t<0.
\]
We may employ the Poincar\'e type inequality (cf. \cite[Chapter 10, Proposition 5.2]{DB}) for each time slice to $w_1(\cdot, t)$, 
and then a time integration over $(-\dl\rho^p,0)$ on both sides,
 and the fact that
$w_1\ge-\ln(2/\eps)$ to obtain that
\begin{align*}
\int_{-\dl\rho^p}^0\int_{K_{\rho}}|w_1|\,\dx\dt&=\int_{-\dl\rho^p}^0\int_{K_\rho}w_{1+}\,\dx\dt
+\int_{-\dl\rho^p}^0\int_{K_\rho}w_{1-}\,\dx\dt\\
&\le C\rho\int_{-\dl\rho^p}^0\int_{K_\rho}|Dw_{1+}|\,\dx\dt+C\rho^{N+p}.
\end{align*}
The integral term on the right-hand side is estimated by H\"older's inequality, Young's inequality and \eqref{Eq:6:1} as
\begin{align*}
C\rho&\int_{-\dl\rho^p}^0\int_{K_\rho}|Dw_{1+}|\,\dx\dt\\
&\le C \rho^{1+N+p-\frac{N+p}p}\bigg(\iint_{Q_\rho}|Dw_{1+}|^p\,\dx\dt\bigg)^{\frac1p}\\
&\le C \rho^{1+N+p-\frac{N+p}p}\bigg(\frac{C}{(R-\rho)^p}\iint_{Q_{R}}|w_1|\,\dx\dt+\frac{CR^{N+p}}{(R-\rho)^p}\bigg)^{\frac1p}\\
&\le C \frac{\rho^{1+N+p-\frac{N+p}p}}{R-\rho}\bigg(\iint_{Q_R}|w_1|\,\dx\dt\bigg)^{\frac1p}+
C\frac{\rho^{1+N+p}}{R-\rho}\bigg(\frac{R}{\rho}\bigg)^{\frac{N+p}p}.\\
\end{align*}
Thus combining above estimates we obtain
\begin{equation*}
\begin{aligned}
\iint_{Q_{\dl\rho}}|w_1|\,\dx\dt&\le 
C \frac{\rho^{1+N+p-\frac{N+p}p}}{R-\rho}\bigg(\iint_{Q_R}|w_1|\,\dx\dt\bigg)^{\frac1p}\\
&\quad+C\frac{\rho^{1+N+p}}{R-\rho}\bigg(\frac{R}{\rho}\bigg)^{\frac{N+p}p}+C\rho^{N+p}.
\end{aligned}
\end{equation*}
An interpolation argument (cf. \cite[Theorem 1]{DT}) yields that
\begin{equation}\label{Eq:3:3}
\frac{1}{(\dl\rho)^{N+p}}\iint_{Q_{\dl\rho}}|w_1|\,\dx\dt\le C({\rm data}).
\end{equation}

An application of Lemma \ref{Lm:4:1} gives that $w_{1+}$ belongs to the generalized $\mathfrak{A}_p^+$.
As a result, Theorem \ref{Thm:Bd} holds for $w_{1+}$. The supreme estimate together
with \eqref{Eq:3:3} yields that
\[
\essup_{Q_{\frac{\dl\rho}2}}w_{1+}\le \frac{C}{(\dl\rho)^{N+p}}\iint_{Q_{\dl\rho}}|w_1|\,\dx\dt\le C({\rm data}),
\]
which implies
\[
\essup_{Q_{\frac{\dl\rho}2}}u\le \mu^+-\frac{\eps}{2e^{C}}\om.
\]
Therefore
\[
\essosc_{Q_{\frac{\dl\rho}2}}u\le\left(1-\frac{\eps}{2e^{C}}\right)\om.
\]
A standard iteration finishes the proof.
\subsection{A Revisit to De Giorgi's Approach}
The purpose of this section is to point out that the H\"older regularity
could be established with less assumptions. Namely, it suffices
to assume $u$ is a member of $\mathfrak{A}^+_p(E;\gm)\cap\mathfrak{B}^-_p(E;\gm)$
or $\mathfrak{A}^-_p(E;\gm)\cap\mathfrak{B}^+_p(E;\gm)$.
The argument is modeled on the one in \cite[Chapter III]{DB-DPE}.
\subsubsection{Expansion of Positivity}
Suppose $K_{4\rho}(y)\times(s,s+\rho^p]\subset E$.
We show in the following that the measure information on the positivity 
of a non-negative member $u$ of $\mathfrak{B}_p^-(E;\gm)$ at $t=s$ translates into pointwise
information forward in time and over a larger space cube.
\begin{proposition}\label{Prop:6:1}
Let $u\in\mathfrak{B}_p^-(E;\gm)$ be non-negative.
Suppose for some $M>0$ and $\al\in(0,1)$,
\[
|[u(\cdot, s)>M]\cap K_\rho(y)|\ge\al |K_\rho|.
\]
Then there exist $\eta,\,\dl\in(0,1)$ depending on the data and $\al$, such that
\[
u(\cdot, t)\ge\eta M\quad\text{ a.e. in }K_{2\rho}(y),
\]
for all 
\[
s+\frac12\dl(4\rho)^p<t<s+\dl(4\rho)^p.
\]
\end{proposition}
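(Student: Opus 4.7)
The plan is to run a standard De~Giorgi expansion of positivity, combining three tools already at hand: time propagation in measure (Proposition~\ref{Prop:5:2}), the measure shrinking lemma (Lemma~\ref{Lm:5:1}), and the critical mass lemma (Lemma~\ref{Lm:critical-mass}), the last of which I will read off directly from Theorem~\ref{Thm:Bd} on non-intrinsic cylinders.

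\emph{First, time propagation.} Fix a parameter $\dl\in(0,1)$ to be chosen later, and apply Proposition~\ref{Prop:5:2} with $A=\dl\cdot 4^p$. This produces $\eps\in(0,1)$ depending on the data, $\al$ and $\dl$, such that
\[
|[u(\cdot,t)>\eps M]\cap K_\rho(y)|\ge \tfrac{\al}{2}|K_\rho|\qquad\text{for every }t\in(s,\,s+\dl(4\rho)^p].
\]
Since $K_\rho(y)\subset K_{4\rho}(y)$, the same measure positivity holds on $K_{4\rho}(y)$ with the reduced density $\al/(2\cdot 4^N)$ at every such $t$.

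\emph{Second, shrinking.} Apply Lemma~\ref{Lm:5:1} on the cylinder $Q:=K_{4\rho}(y)\times(s,\,s+\dl(4\rho)^p]$ with this reduced density to obtain, for every positive integer $j_*$,
\[
\left|\left[u\le \tfrac{\eps M}{2^{j_*}}\right]\cap Q\right| \le \frac{C\,\dl^{1-1/p}}{\al\,j_*^{(p-1)/p}}\,|Q|,
\]
with $C$ depending only on the data.

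\emph{Third, pointwise positivity.} I would then apply Theorem~\ref{Thm:Bd} to the non-intrinsic pair of cylinders $Q$ (outer) and $K_{2\rho}(y)\times(s+\tfrac{1}{2}\dl(4\rho)^p,\,s+\dl(4\rho)^p]$ (inner), with level $k=\eps M/2^{j_*}$. Since $(u-k)_-\le k$ pointwise for $u\ge 0$, this reproduces the $L^1$-to-$L^\infty$ step in the proof of Lemma~\ref{Lm:critical-mass} and delivers
\[
\essup_{K_{2\rho}(y)\times(s+\frac{1}{2}\dl(4\rho)^p,\,s+\dl(4\rho)^p]}(u-k)_- \le C(\text{data},\dl)\,k\,\frac{|[u<k]\cap Q|}{|Q|}.
\]
Ordering the choices as $\dl$ first (say $\dl=\tfrac{1}{2}$), then $\eps$ from Proposition~\ref{Prop:5:2}, and finally $j_*$ large enough that the shrinking fraction in the second step is below $1/(2C(\text{data},\dl))$, the display above forces $(u-k)_-\le k/2$, i.e.\ $u\ge \eta M$ a.e., with $\eta:=\eps/2^{j_*+1}$, on the whole slab claimed in the proposition.

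\emph{Main obstacle.} The only real subtlety is that Lemma~\ref{Lm:critical-mass} as stated uses the intrinsic cylinder $(y,t_\sharp)+Q_{4\rho}$, whose time length $(4\rho)^p$ does not sit inside $Q$ when $\dl<1$. This is bypassed by going one step further back to Theorem~\ref{Thm:Bd} on the non-intrinsic pair above; the homogeneous quantity $\mathcal{A}$ appearing there then carries a $\dl$-dependence which is harmlessly absorbed into $C(\text{data},\dl)$ before $j_*$ is taken large, so that $\eta$ and $\dl$ end up depending only on the data and $\al$, as required.
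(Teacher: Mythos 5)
Your skeleton is the same as the paper's: propagate positivity in time, shrink the measure of the near-zero set, then apply a critical-mass / sup bound to convert the small measure into a pointwise lower bound. The paper enlarges to $K_{4\rho}$ first and invokes Proposition~\ref{Prop:5:1}, so the resulting $\dl$ comes out small automatically (of order $(\al 4^{-N})^{p+1}$ by Remark~\ref{Rmk:5:1}); you instead invoke Proposition~\ref{Prop:5:2} on $K_\rho$ with $A=\dl\cdot 4^p$ and fix $\dl=1/2$, which is heavier than necessary since Proposition~\ref{Prop:5:1} already does the job. Your remark that Lemma~\ref{Lm:critical-mass}'s intrinsic cylinder does not sit inside $Q=K_{4\rho}(y)\times(s,s+\dl(4\rho)^p]$ when $\dl<1$ is a legitimate point --- the paper's direct citation of Lemma~\ref{Lm:critical-mass} is loose there, and rerunning the argument from Theorem~\ref{Thm:Bd} on the non-intrinsic pair is the right repair.

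The real gap is the choice $\dl=1/2$. The standing hypothesis stated immediately before Proposition~\ref{Prop:6:1} is $K_{4\rho}(y)\times(s,s+\rho^p]\subset E$. With $\dl=1/2$ your conclusion (and all the intermediate cylinders) live at times up to $s+\dl(4\rho)^p=s+2^{2p-1}\rho^p$, which for every $p>1$ exceeds $s+\rho^p$, so the argument escapes the prescribed domain and Proposition~\ref{Prop:5:2}, Lemma~\ref{Lm:5:1} and Theorem~\ref{Thm:Bd} cannot all be applied where you apply them. One must take $\dl\le 4^{-p}$, and that is precisely what emerges from Proposition~\ref{Prop:5:1} in the paper's ordering; so either switch to Proposition~\ref{Prop:5:1}, or replace ``$\dl=1/2$'' by ``$\dl$ small depending on the data and $\al$.'' A minor further slip: Lemma~\ref{Lm:5:1} produces the factor $\dl^{-1/p}$, not $\dl^{1-1/p}$, so the shrinking estimate deteriorates (rather than improves) as $\dl\to 0$; this does not break the logic because $\dl$ is held fixed before $j_*$ is chosen, but the displayed inequality is not the one Lemma~\ref{Lm:5:1} gives.
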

\begin{proof}
Assume $(y,s)=(0,0)$. We rephrase the starting information in a larger cube:
\[
|[u(\cdot, 0)>M]\cap K_{4\rho}(y)|\ge\al 4^{-N}|K_{4\rho}|.
\]
By Proposition~\ref{Prop:5:1},
there exist $\dl,\,\eps>0$ depending only on the data and $\al$, such that
\[
|[u(\cdot, t)>\eps M]\cap K_{4\rho}|\ge\frac{\al}2 4^{-N}|K_{4\rho}|
\]
for all
\[
0<t<\dl(4\rho)^p.
\]
Next, by Lemma~\ref{Lm:5:1}, there exists $C>0$ depending only on the data, such that for any positive integer $j_*$,
we have
\[
\left|\left[u\le\frac{\eps M}{2^{j_*}}\right]\cap Q\right|\le\frac{C}{\al \dl^{\frac1p} j_*^{\frac{p-1}p}}|Q|,\quad
\text{ where }Q=K_{4\rho}\times\left(0,\dl(4\rho)^p\right].
\]
Finally, let $\nu$ be the number claimed in Lemma~\ref{Lm:critical-mass}.
Choose $j_*$ so large that
\[
\frac{C}{\al \dl^{\frac1p} j_*^{\frac{p-1}p}}\le\nu.
\]
Thus by Lemma~\ref{Lm:critical-mass} with $\mu^-=0$, we conclude that
\[
u(\cdot, t)\ge \frac{\eps M}{2^{j_*+1}}\quad\text{ a.e. in }K_{2\rho}
\]
for all times
\[
\frac12\dl(4\rho)^p<t<\dl(4\rho)^p.
\]
The proof is finished by choosing $\eta=\eps 2^{-j_*-1}$.
\end{proof}
\begin{remark}\label{Rmk:5:1}\upshape
By repeated applications of Proposition~\ref{Prop:6:1},
for any $A>0$ there exist $\bar\eta\in(0,1)$ depending on the data, $\al$ and $A$,
such that
\[
u(\cdot, t)\ge\bar\eta M\quad\text{ a.e. in }K_{2\rho}(y),
\]
for all times
\[
s+\rho^p<t<s+A\rho^p.
\]
\end{remark}
\subsubsection{Another Proof of Theorem~\ref{Thm:Holder}}
Let $\nu>0$ be the number fixed in Lemma~\ref{Lm:critical-mass} with $a=1/2$,
and suppose 
\begin{equation*}
\left|\left[\mu^+-u\ge\frac{\om}2\right]\cap Q_{\rho}\right|\le\nu|Q_{\rho}|.
\end{equation*}
Then since $u\in\mathfrak{A}^+_p(E;\gm)$, Lemma~\ref{Lm:critical-mass} would give us that
\[
\mu^+-u\ge\frac{\om}4\quad\text{ a.e. in }Q_{\frac{\rho}2},
\]
which in turn gives the reduction of oscillation:
\[
\essosc_{Q_{\frac{\rho}2}}u\le\frac34\om.
\]
Now suppose to the contrary that
\begin{equation*}
\left|\left[\mu^+-u\ge\frac{\om}2\right]\cap Q_{\rho}\right|>\nu|Q_{\rho}|.
\end{equation*}
Then there exists some 
\[-\rho^p\le s\le-\frac{\nu}2\rho^p,\]
such that 
\begin{equation*}
\left|\left[\mu^+-u(\cdot, s)\ge\frac{\om}2\right]\cap K_{\rho}\right|>\frac{\nu}2|K_{\rho}|.
\end{equation*}
Indeed, if the above inequality does not hold for any $s$ in the given 
interval, then 
\begin{align*}
\left|\left[\mu^+-u\ge\frac{\om}2\right]\cap Q_{\rho}\right|&=
\int_{-\rho^p}^{-\frac12\nu\rho^p}\left|\left[\mu^+-u(\cdot, s)\ge\frac{\om}2\right]\cap K_{\rho}\right|\,ds\\
&\quad+\int^{0}_{-\frac12\nu\rho^p}\left|\left[\mu^+-u(\cdot, s)\ge\frac{\om}2\right]\cap K_{\rho}\right|\,ds\\
&\le\nu |Q_{\rho}|.
\end{align*}
Since $\mu^+-\frac12\om>\mu^-+\frac12\om$ always holds, this implies
\begin{equation*}
\left|\left[u(\cdot,s)-\mu^->\frac{\om}2\right]\cap K_{\rho}\right|\ge\frac{\nu}2|K_{\rho}|.
\end{equation*}
Then since $u\in\mathfrak{B}^-_p(E;\gm)$, Proposition~\ref{Prop:6:1} (see also Remark~\ref{Rmk:5:1}) gives
$\eta\in(0,1)$ depending only on the data, such that
\[
u(\cdot,s)-\mu^->\eta\om\quad\text{ a.e. in }Q_{\frac{\rho}2},
\]
which in turn gives
\[
\essosc_{Q_{\frac{\rho}2}}u\le(1-\eta)\om.
\]
\section{Harnack's Inequalities for Functions in $\mathfrak{B}_p$}\label{S:6}
Assume
\[
K_{4\rho}(y)\times[s-(4\rho)^p,s+(4\rho)^p]\Subset E.
\]
The following Harnack's inequality has been shown in \cite{GV}. See also \cite{W}.
\begin{theorem}\label{Thm:Harnack}
Let $u\in\mathfrak{B}_p(E;\gm)$ be non-negative.
There exist $\theta\in(0,1)$ and $C>1$ depending only on the data,
such that
\[
C^{-1}\sup_{K_{\rho}(y)}u(\cdot,s-\theta\rho^p)\le u(y,s)\le C\inf_{K_{\rho}(y)}u(\cdot,s+\theta\rho^p).
\]
\end{theorem}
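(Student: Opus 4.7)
The plan is to follow the intrinsic scaling and expansion-of-positivity strategy developed in \cite{DBGV} and \cite{GV}, relying only on the tools already built in this paper: the expansion of positivity (Proposition~\ref{Prop:6:1} and its iterated form Remark~\ref{Rmk:5:1}), the local boundedness estimate (Theorem~\ref{Thm:Bd}), the critical mass lemma (Lemma~\ref{Lm:critical-mass}), and the H\"older continuity established in Section~\ref{S:5}, which ensures that pointwise values of $u$ are meaningful. By translation we take $(y,s)=(0,0)$, and by scaling we normalize $u(0,0)=1$. The forward and backward inequalities will be treated separately, the forward one being the substantive half.

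For the forward inequality $u(0,0)\le C\inf_{K_\rho}u(\cdot,\theta\rho^p)$, introduce the non-decreasing quantity $M(\tau)=\sup_{Q_{\tau\rho}}u$ and compare it against the barrier $N(\tau)=(1-\tau)^{-\lambda}$ with an intrinsic scaling exponent $\lambda$ depending only on the data. Since $M(0)=1=N(0)$ and $N(\tau)\to\infty$ as $\tau\to1^-$, there is a largest $\tau_*\in[0,1)$ at which $M(\tau_*)=N(\tau_*)$; one then picks $(x_*,t_*)\in\overline{Q_{\tau_*\rho}}$ with $u(x_*,t_*)\ge\tfrac12 M(\tau_*)$. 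The choice of $\tau_*$ forces $\sup u\lesssim M(\tau_*)$ on a cylinder of radius $r\sim(1-\tau_*)\rho$ around $(x_*,t_*)$; combining this with Theorem~\ref{Thm:Bd}, which bounds the $L^\infty$-norm from above by an $L^1$-average, the level set $[u>cM(\tau_*)]$ must fill a controlled fraction of a forward sub-cylinder. The measure theoretical lemma of \cite{DBGV} then projects this information onto a single time slice $\bar t>t_*$ to give
\[
\bigl|[u(\cdot,\bar t)\ge cM(\tau_*)]\cap K_r(x_*)\bigr|\ge\alpha|K_r|
\]
with universal $c,\alpha$. Iterating Proposition~\ref{Prop:6:1} as in Remark~\ref{Rmk:5:1} propagates this into the pointwise bound $u\ge\bar\eta cM(\tau_*)\ge\bar\eta c$ on $K_\rho(x_*)\times[\bar t+r^p,\bar t+Ar^p]$ for any prescribed $A>0$. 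Choosing $A$ so that this cylinder contains $K_\rho\times\{\theta\rho^p\}$ for a universal $\theta$ yields the forward inequality.

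The backward inequality $\sup_{K_\rho}u(\cdot,-\theta\rho^p)\le Cu(0,0)$ follows from the forward one by a rescaling argument. Applying the forward inequality at an arbitrary point $(x_0,-\theta\rho^p)$ with $x_0\in K_\rho$, but with a larger spatial radius $\rho'$ chosen so that $\theta'(\rho')^p=\theta\rho^p$ (where $\theta'$ is the forward constant just obtained) and $\rho'>\rho$, one arrives at $u(x_0,-\theta\rho^p)\le Cu(0,0)$, using the inclusion $0\in K_{\rho'}(x_0)$. Taking the supremum over $x_0\in K_\rho$ completes the argument; the ambient hypothesis $K_{4\rho}(y)\times[s-(4\rho)^p,s+(4\rho)^p]\Subset E$ provides the room needed to apply the forward inequality at the shifted base point.

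The main obstacle lies in the forward direction: the correct choice of the intrinsic scaling exponent $\lambda$ and the careful application of the \cite{DBGV} measure theoretical lemma. One must ensure that the chain (pointwise sup at $(x_*,t_*)$) $\to$ (measure on a cylinder via Theorem~\ref{Thm:Bd}) $\to$ (measure on a time slice via the \cite{DBGV} lemma) $\to$ (pointwise bound via Remark~\ref{Rmk:5:1}) produces constants that depend only on the data, independent of $\tau_*$ and of the size of $M(\tau_*)$. This is the step traditionally handled by a Krylov--Safonov covering argument; replacing it cleanly requires the long-time propagation of positivity peculiar to the class $\mathfrak{B}_p$, which is precisely what Section~\ref{S:4} provides via the second-energy inequality \eqref{Eq:1:2}.
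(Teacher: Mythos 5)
Your proposal takes a genuinely different route from the paper's. The paper does not prove Theorem~\ref{Thm:Harnack} directly: it cites \cite{GV} for the original direct proof, and its own contribution in Section~\ref{S:6} is the \emph{weak} Harnack inequality (Theorem~\ref{Thm:7:2}), after which it remarks that combining Theorem~\ref{Thm:7:2} with the $L^\infty$--$L^1$ bound of Theorem~\ref{Thm:Bd} (via the standard interpolation of \cite{DT,W}) yields Theorem~\ref{Thm:Harnack}. You instead reconstruct the direct \cite{GV} strategy: normalize $u(0,0)=1$, compare $M(\tau)=\sup_{Q_{\tau\rho}}u$ against the barrier $N(\tau)=(1-\tau)^{-\lambda}$, locate a near-maximum $(x_*,t_*)$ with a controlled sup on a cylinder of size $(1-\tau_*)\rho$, convert that into a measure fraction by Chebyshev together with Theorem~\ref{Thm:Bd}, and then iterate the expansion of positivity (Proposition~\ref{Prop:6:1}, Remark~\ref{Rmk:5:1}) choosing $\lambda$ so that the geometric decay $\bar\eta^n$ is absorbed by $(1-\tau_*)^{-\lambda}$. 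This is a coherent and correct alternative plan, and it explicitly by-passes the weak Harnack inequality that the paper establishes. The trade-off is exactly the one the paper mentions: your route gives Harnack directly but not the $L^q$-to-$\essinf$ estimate, while the paper's route produces the weak Harnack inequality as a result of independent interest.

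Two smaller remarks. First, the invocation of the \cite{DBGV} clustering lemma in your forward step is misplaced: once you have $|[u>cM(\tau_*)]\cap Q_r|\ge\alpha|Q_r|$ with a universal $\alpha$ (depending only on $\lambda$ and the constant in Theorem~\ref{Thm:Bd}), a simple Fubini/mean-value selection of a good time slice suffices to feed Proposition~\ref{Prop:6:1}; the clustering lemma is needed only when one wants the $\alpha$-dependence to become power-like, which is the engine of Theorem~\ref{Thm:7:2} but not of the direct argument. Second, the time bookkeeping deserves a word more than you give it: after the dyadic expansion reaches radius $\rho$, the final time lies somewhere in an interval of width $O(\rho^p)$ depending on $t_*$ and $\tau_*$, and one must continue propagating at fixed radius (using the arbitrary $A$ in Remark~\ref{Rmk:5:1}) to reach the \emph{prescribed} level $t=\theta\rho^p$; this also quietly uses the room provided by the ambient inclusion $K_{4\rho}(y)\times[s-(4\rho)^p,s+(4\rho)^p]\Subset E$, since the expansion takes place around $x_*\neq y$. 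Neither of these is a fatal gap, but both should be made explicit in a full write-up.
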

The approach used in \cite{GV} is a direct one, thus by-passing a weak Harnack inequality.
On the other hand, a weak Harnack inequality is established in \cite{W} for $p=2$
using the Krylov-Safonov covering argument (cf. \cite{KS}). Here we give a transparent proof of a weak
Harnack inequality for the class $\mathfrak{B}^-_p(E,\gm)$, via a measure theoretical 
lemma in \cite{DBGV}, thus avoiding the heavy covering argument.
\subsection{Weak Harnack Inequality for Functions in $\mathfrak{B}^-_p(E,\gm)$}
Assume the cylinder
\[
K_{4\rho}\times(s,s+(4\rho)^p]\Subset E.
\]
\begin{theorem}\label{Thm:7:2}
Let $u\in\mathfrak{B}_p^-(E;\gm)$ be non-negative.
Then there exist $\dl_o,\, q\in(0,1)$ and $C>1$ depending only on the data, such that
\[
C\essinf_{K_{2\rho}(y)}u(\cdot,t)\ge \left(\dashint_{K_{\rho}(y)} u^q(\cdot,s)\,\dx\right)^{\frac1{q}},
\]
for all times
\[
s+\frac12\dl_o\rho^p<t<s+\dl_o\rho^p.
\]
\end{theorem}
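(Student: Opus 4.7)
The plan is to deduce the weak Harnack inequality from the expansion of positivity established in Section~\ref{S:5} via a layer-cake decomposition. Fix a target time $t\in(s+\tfrac12\dl_o\rho^p,\, s+\dl_o\rho^p)$ and set $m\df{=}\essinf_{K_{2\rho}(y)}u(\cdot,t)$. Introducing the (scaled) distribution function
\[
\al(k)\df{=}\frac{|[u(\cdot,s)>k]\cap K_\rho(y)|}{|K_\rho|},
\]
the layer-cake formula yields
\[
\dashint_{K_\rho(y)}u^q(\cdot,s)\,\dx=q\int_0^\infty k^{q-1}\,\al(k)\,\d k,
\]
so matters reduce to a pointwise tail bound on $\al(k)$ in terms of $m/k$.

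For each $k>0$ with $\al(k)>0$, Remark~\ref{Rmk:5:1} (applied with $A$ chosen large enough to reach the target interval above) furnishes a constant $\bar\eta=\bar\eta(\al(k))\in(0,1)$, depending only on the data, $A$, and $\al(k)$, such that $u(\cdot,t)\ge\bar\eta\, k$ a.e.\ in $K_{2\rho}(y)$. In particular $m\ge\bar\eta(\al(k))\,k$, which inverts to an implicit bound $\al(k)\le\bar\eta^{-1}(m/k)$. The crux of the proof is to promote this to a \emph{polynomial} quantification $\bar\eta(\al)\gtrsim\al^\beta$ for some universal $\beta>0$; this is precisely the role of the DBGV measure-theoretical lemma, which by tracking power-law constants through Proposition~\ref{Prop:5:1}, the shrinking Lemma~\ref{Lm:5:1}, and the critical-mass Lemma~\ref{Lm:critical-mass} at each step of the iterative expansion replaces a Krylov--Safonov covering argument.

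The main obstacle is exactly this quantitative tracking: a naive chaining of Proposition~\ref{Prop:5:1} across the fixed window $[s+\rho^p,s+A\rho^p]$, or the straightforward inductive proof of Proposition~\ref{Prop:5:2}, loses a factor like $\eps\approx\al$ at each of the $\sim\dl^{-1}\approx\al^{-(p+1)}$ iterations, producing a disastrous $\bar\eta$. Extracting a power law therefore requires re-examining the inductive construction so that the level $\widehat{M}_i$ is reduced only by a polynomial factor as $i$ grows, using the finer measure information from Lemma~\ref{Lm:5:1} (rather than a bare-hands Proposition~\ref{Prop:5:1}) at each step. Once $\al(k)\le C(m/k)^\beta$ is in hand for $k\ge cm$ (with the trivial bound $\al(k)\le 1$ for smaller $k$), substituting into the layer cake and splitting the integral produces, for every $q\in(0,\beta)$,
\[
\left(\dashint_{K_\rho(y)}u^q(\cdot,s)\,\dx\right)^{1/q}\le C\,m,
\]
which is the asserted weak Harnack inequality.
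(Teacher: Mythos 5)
Your overall framework --- layer-cake decomposition plus a power-law expansion of positivity $u(\cdot,t)\gtrsim\al^d M$ --- is precisely the skeleton of the paper's proof, and you correctly single out the DBGV clustering lemma as the ingredient that replaces Krylov--Safonov. However, the \emph{mechanism} you propose for extracting the power $\al^d$ is misidentified, and the proposal stalls exactly where the real work has to be done. You suggest sharpening the time-iteration of Proposition~\ref{Prop:5:2} so that the levels $\widehat{M}_i$ drop only polynomially, ``using the finer measure information from Lemma~\ref{Lm:5:1} at each step.'' But a chaining over $\sim\al^{-(p+1)}$ time steps is intrinsically disastrous, and no amount of finer measure information along that iteration is shown to repair it --- this is a plan, not an argument.

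In the paper the power law does \emph{not} come from improving the time propagation; it comes from trading measure density for \emph{spatial scale}. After scaling, one picks a single time slice $\tau_1\in(\tfrac12,1]$ on which both $\|Dv(\cdot,\tau_1)\|_{L^1(K_1)}\lesssim\al^{-(p+1)}$ and $|[v(\cdot,\tau_1)>1]\cap K_1|\ge\tfrac{\al}{2}|K_1|$ hold; the DBGV lemma then produces a point $y_o$ and a small radius $\sig\approx\al^{4+1/p}$ such that $v(\cdot,\tau_1)$ exceeds $\tfrac12$ on at least half of $K_\sig(y_o)$. The crucial point is that the measure density is now $\tfrac12$, \emph{independent of $\al$}. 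One then iterates the spatial expansion Proposition~\ref{Prop:6:1} with its measure parameter frozen at $\al=\tfrac12$, doubling the cube from radius $\sig\rho$ up to about $3\rho$. Each doubling costs a \emph{fixed} factor $\bar\eta$ depending only on the data, the number of doublings is $n\approx\log_2(1/\sig)\approx\log(1/\al)$, and the accumulated loss $\bar\eta^{\,n}\approx\al^{-\ln\bar\eta}$ is therefore a power of $\al$; the time windows along the doublings sum geometrically, so the final interval $(\tfrac12\dl_o\rho^p,\dl_o\rho^p)$ is independent of $\al$. This scale-shift via DBGV followed by iteration at fixed density is the missing idea in your proposal. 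Once the bound $I\ge\eta_o\al(M)^d M$ is established, your layer-cake step (split the integral at $k=I$, choose $q<1/d$ so the tail converges) is correct and coincides with the paper's.
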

A combination of Theorem~\ref{Thm:7:2} and Theorem~\ref{Thm:Bd}
would give another proof of Theorem~\ref{Thm:Harnack} (cf. \cite{DT, W}).
The key to proving Theorem~\ref{Thm:7:2} is to show an expansion of
positivity with a power-like dependence on the measure distribution
of the positivity. The main tool is a certain clustering lemma  from \cite{DBGV}.
\begin{proposition}
Let $u\in\mathfrak{B}_p^-(E;\gm)$ be non-negative.
Suppose for some $M>0$ and $\al\in(0,1)$,
\[
|[u(\cdot, s)>M]\cap K_\rho(y)|\ge\al |K_\rho|.
\]
Then there exist $\dl_o,\,\eta_o\in(0,1)$ and $d>1$ depending only on the data, such that
\[
u(\cdot, t)\ge\eta_o\al^d M\quad\text{ a.e. in }K_{2\rho}(y),
\]
for all times
\[
s+\frac12\dl_o\rho^p<t<s+\dl_o\rho^p.
\]
\end{proposition}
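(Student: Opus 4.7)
My plan is to chain together three ingredients already established in the paper---the short-time measure propagation of Proposition~\ref{Prop:5:1}, the critical-mass lemma (Lemma~\ref{Lm:critical-mass}), and the expansion of positivity of Proposition~\ref{Prop:6:1}---glued by a measure-theoretical clustering lemma of the type established in \cite{DBGV}. Each of those ingredients, used in isolation, generates either an implicit or a super-exponential dependence of the pointwise bound on $\alpha$; the clustering lemma is precisely what converts this into the polynomial dependence $\alpha^{d}$ required here.

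First, I would apply Proposition~\ref{Prop:5:1} to the hypothesized measure information, obtaining $\epsilon,\delta\in(0,1)$ (with $\epsilon\simeq\alpha$ and $\delta\simeq\alpha^{p+1}$ by Remark~\ref{Rmk:5:1}) such that
\[
|[u(\cdot,t)>\epsilon M]\cap K_{\rho}(y)|\ge\tfrac{\alpha}{2}|K_{\rho}|\qquad\text{for every }t\in(s,s+\delta\rho^{p}].
\]
Thus the set $E:=\{u>\epsilon M\}$ occupies Lebesgue density at least $\alpha/2$ in the cylinder $Q:=K_{\rho}(y)\times(s,s+\delta\rho^{p}]$. Next, I would apply a quantitative Lebesgue-differentiation clustering lemma in the spirit of \cite{DBGV} to locate a sub-cylinder
\[
\widetilde Q=K_{r}(x_{0})\times(t_{1},t_{1}+r^{p}]\subset Q,\qquad r=c_{1}\alpha^{b_{1}}\rho,
\]
(with $b_{1}$ depending only on the data) on which the density of $E$ exceeds $1-\nu$, where $\nu$ is the threshold from Lemma~\ref{Lm:critical-mass} taken with $a=1/2$. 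Lemma~\ref{Lm:critical-mass} applied on $\widetilde Q$ with $\mu^{-}=0$ and $k=\epsilon M$ then upgrades density to pointwise positivity: $u\ge\tfrac{1}{2}\epsilon M$ almost everywhere on $K_{r/2}(x_{0})\times(t_{1}+\tfrac{1}{2}r^{p},t_{1}+r^{p}]$, which is a bound of order $\alpha M$ on a cube of radius $\alpha^{b_{1}}\rho$.

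With pointwise positivity of order $\alpha M$ on a small sub-cube in hand, the density parameter in every subsequent invocation of Proposition~\ref{Prop:6:1} is $\alpha'=1$, an absolute constant, so each doubling of the cube costs only a fixed data-dependent reduction factor $\eta^{\star}\in(0,1)$. I would iterate Proposition~\ref{Prop:6:1} approximately $n\simeq\log_{2}(\rho/r)\simeq b_{1}\log_{2}(1/\alpha)$ times until the doubled cube swallows $K_{2\rho}(y)$; the cumulative constant is
\[
(\eta^{\star})^{n}\cdot c\,\alpha\;=\;\eta_{o}\,\alpha^{d}\qquad\text{with}\qquad d:=1+b_{1}\log_{2}(1/\eta^{\star}),
\]
exactly of the required form. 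The accumulated time increments sum as a geometric series dominated by $C\rho^{p}$, and by choosing $\delta_{o}$ in terms of the data this fits into the terminal window $(s+\tfrac{1}{2}\delta_{o}\rho^{p},s+\delta_{o}\rho^{p})$.

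The main obstacle is the clustering step: from a set occupying only an $\alpha$-fraction of $Q$ one must extract a sub-cylinder of a definite, polynomially small size on which the density is as close to $1$ as the critical-mass threshold requires. This is the only stage at which polynomial (rather than super-exponential) dependence on $\alpha$ is generated, and it is precisely the role of the measure-theoretical lemma of \cite{DBGV}. Everything else is careful but routine bookkeeping of powers of $\alpha$ and of time increments.
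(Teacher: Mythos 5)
Your overall strategy — Proposition~\ref{Prop:5:1} to propagate the measure information in time, a clustering lemma of DBGV type to locate a small sub-region of high density with only polynomial cost in $\al$, then iterated expansion of positivity to cover $K_{2\rho}$ — is exactly the strategy the paper follows, and your bookkeeping of the exponent $d$ is in the right shape. However, there is a genuine gap in your Step~2. The lemma of \cite{DBGV} is a $W^{1,1}$-clustering result for functions of the \emph{spatial} variable alone: it takes as hypotheses a bound on $\int_{K_1}|Dv(\cdot,\tau_1)|\,\dx$ at a fixed time $\tau_1$ together with a measure lower bound $|[v(\cdot,\tau_1)>1]\cap K_1|\ge\frac{\al}{2}|K_1|$, and produces a small spatial cube $K_\sig(y_o)$ on which $[v(\cdot,\tau_1)>\frac12]$ has high density \emph{at that single time slice}. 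You instead claim the clustering lemma produces a space-time sub-\emph{cylinder} $\widetilde Q$ on which the set $\{u>\eps M\}$ has density exceeding $1-\nu$, and then feed $\widetilde Q$ directly into Lemma~\ref{Lm:critical-mass}, whose hypothesis is a cylinder measure condition. That step is not available: the $\mathfrak{B}_p^-$ energy estimates control only the spatial gradient, not the time derivative, so no space-time $W^{1,1}$ clustering argument applies.

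The paper resolves this by first averaging the energy estimate \eqref{Eq:1:1} in time to find a single slice $\tau_1$ that carries both the $L^1$-gradient bound and the measure bound, then applying \cite{DBGV} at that slice to get density $\ge\frac12$ (not $1-\nu$) on $K_\sig(y_o)$ with $\sig=C^{-1}\al^{4+1/p}$, and finally handing this \emph{time-slice} measure information to Proposition~\ref{Prop:6:1}. Proposition~\ref{Prop:6:1} is precisely the tool built for this hand-off: it already packages Proposition~\ref{Prop:5:1} (short-time propagation), Lemma~\ref{Lm:5:1} (measure shrinking), and Lemma~\ref{Lm:critical-mass} into a statement whose input is a single-time measure bound with fixed $\al=\frac12$, so there is no need to push the density above the critical-mass threshold at the clustering stage. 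To repair your argument you should drop the attempt to reach density $1-\nu$ in a cylinder, settle for density $\ge\frac12$ at a time slice, and invoke Proposition~\ref{Prop:6:1} (rather than Lemma~\ref{Lm:critical-mass}) as the very first expansion step; the rest of your iteration and exponent counting then goes through as you sketched.
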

\begin{proof}
Assume $(y,s)=(0,0)$. By Proposition~\ref{Prop:5:1},
there exist $\dl=C^{-1}\al^{p+1}$ and $\eps=C^{-1}\al$, where $C>1$ depends only on the data, such that
\begin{equation}\label{Eq:7:1}
|[u(\cdot, t)>\eps M]\cap K_{\rho}|\ge\frac{\al}2|K_\rho|\quad\text{ for all }\quad0<t<\dl\rho^p.
\end{equation}

Now we set $Q^\prime=K_{2\rho}\times(0,\dl\rho^p]$ 
and $Q=K_{\rho}\times(\frac12\dl\rho^p,\dl\rho^p]$.
Apply \eqref{Eq:1:1} to $(u-M)_-$ with the pair of cylinders $Q\subset Q^\prime$ to obtain
\[
\iint_{Q}|D(u-M)_-|^p\,\dx\dt\le C\frac{M^p}{\dl\rho^p}|Q|.
\]
Under the change of variables
\[
x\to\frac{x}{\rho},\qquad t\to\frac{t}{\dl\rho^p},\qquad w=\frac{(u-M)_-}M,
\]
the above estimate reads
\begin{equation}\label{Eq:7:2}
\int^1_{\frac12}\int_{K_1}|Dw|^p\,\dx\dt\le\frac{C}{\al^{p+1}}.
\end{equation}
In order to use the lemma in \cite{DBGV}, we introduce $v=(1-w)/\eps$.
Then in terms of $v$ the measure information \eqref{Eq:7:1} reads
\begin{equation}\label{Eq:7:3}
|[v(\cdot,t)>1]\cap K_1|\ge\frac{\al}2|K_1|\quad\text{ for all }\quad\frac12<t<1.
\end{equation}
Combining \eqref{Eq:7:2} and \eqref{Eq:7:3}, there exists $\tau_1\in(\frac12,1]$
satisfying
\[
\int_{K_1}|Dv(\cdot,\tau_1)|\,\dx\le\frac{C}{\al^{p+1}},\qquad |[v(\cdot,\tau_1)>1]\cap K_1|\ge\frac{\al}2|K_1|.
\]
Now an application of the lemma in \cite{DBGV} yields that there exist $y_o\in K_1$ and
$\sig=C^{-1}\al^{4+\frac1p}$ for some absolute constant $C>1$, such that
\[
\left|\left[v(\cdot,\tau_1)>\frac12\right]\cap K_{\sig}(y_o)\right|\ge\frac12|K_\varep|.
\]
Returning to the original coordinates gives
\[
\left|\left[u(\cdot, t_1)>\frac12\eps M\right]\cap K_{\sig\rho}(x_o)\right|\ge\frac12|K_{\sig\rho}|
\]
for some $x_o\in K_\rho$ and $\frac12\dl\rho^p<t_1<\dl\rho^p$.
Using this measure information, we may apply Proposition~\ref{Prop:6:1} repeatedly
(choosing $\al=1/2$ in Proposition~\ref{Prop:6:1}) to obtain 
$\bar\eta,\,\bar\dl\in(0,1)$ depending only on the data,
such that for $n=1,2,\cdots$,
\[
u(\cdot, t)\ge\frac12\eps\bar{\eta}^n M\quad\text{ a.e. in }K_{2^n\sig\rho}(x_o)
\]
for all
\[
t_{n-1}+\frac12\bar{\dl}(2^n\sig\rho)^p<t<t_{n-1}+\bar{\dl}(2^n\sig\rho)^p\df{=}t_n.
\]
Finally we choose $n$ so large that $2^n\sig=3$, such that $K_{2\rho}\subset K_{2^n\sig\rho}(x_o)$.
At the same time, taking into consideration of the power-like dependence on $\al$
of $\eps$ and $\sig$, there exist $\eta_o\in(0,1)$ and $d>1$ depending only on the data, such that
\[
\eps\bar\eta^n=\eps2^{n\ln\bar{\eta}}=\eps\left(\frac{\sig}3\right)^{-\ln\bar\eta}=\eta_o\al^d.
\]
The time interval for such positivity is
\[
t_n-\frac12\bar{\dl}(3\rho)^p=t_n-\frac12\bar{\dl}(2^n\sig\rho)^p<t<t_n.
\]
We calculate $t_n$:
\[
t_n=t_1+\sum_{i=1}^{n-1} \bar{\dl}(2^{i}\sig\rho)^p=t_1+\frac{\bar{\dl}(2^{n}\sig\rho)^p-\bar{\dl}(2\sig\rho)^p}{2^p-1}
=t_1+\bar{\dl}\rho^p\frac{3^p-(2\sig)^p}{2^p-1}.
\]
With no loss of generality, we assume $\sig<1/4$. 
In this way, it is not hard to see there exist $\dl_o,\,\eta_o\in(0,1)$ depending
only on the data, such that
\[
u(\cdot, t)\ge\eta_o\al^d M\quad\text{ a.e. in }K_{2\rho}
\]
for all
\[
t_1+\frac12\dl_o\rho^p<t<t_1+\dl_o\rho^p.
\]
The qualitative location of $t_1\in(0,\rho^p)$ may be removed by
repeated applications of this conclusion.
The proof is then finished by properly redefining $\dl_o$.
\end{proof}
\subsubsection{Proof of Theorem~\ref{Thm:7:2}}
Assume $(y,s)=(0,0)$ and define
\[
I\df{=}\essinf_{K_{2\rho}\times(\frac12\dl_o\rho^p,\dl_o\rho^p]}u.
\]
We first estimate the $L^q$-norm of $u(\cdot,0)$ by its measure distribution:
\begin{equation}\label{Eq:7:4}
\begin{aligned}
\int_{K_{\rho}}u^q(\cdot,0)\,\dx&= q\int_0^\infty |[u(\cdot,0)>M]\cap K_\rho|M^{q-1}\,\d M\\
&\le q\int_I^\infty |[u(\cdot,0)>M]\cap K_\rho|M^{q-1}\,\d M+I^q|K_{\rho}|.
\end{aligned}
\end{equation}
By Theorem~\ref{Thm:7:2}, there exist $d>1$ and $\eta_o\in(0,1)$
depending only on the data, such that
\[
I\ge\eta_o M\left(\frac{|[u(\cdot,0)>M]\cap K_{\rho}|}{|K_\rho|}\right)^d.
\]
Thus we may estimate the first term on the right-hand side of \eqref{Eq:7:4} by
\[
q\int_I^\infty |[u(\cdot,0)>M]\cap K_\rho|M^{q-1}\,\d M\le\frac{qI^{\frac1d}}{\eta_o^{\frac1d}}|K_{\rho}|\int_I^\infty M^{q-\frac1d-1}\,\d M.
\]
Now we stipulate to take $q<1/d$, such that the improper integral on the right-hand side
converges. In such a way, the right-hand side of the above inequality is 
bounded above by $CI^q|K_\rho|$. Hence putting everything back in \eqref{Eq:7:4},
 we obtain the desired conclusion.

\end{document}